\newcommand*\xbar[1]{%
	\hbox{%
		\vbox{%
			\hrule height 0.5pt 
			\kern0.5ex
			\hbox{%
				\kern-0.1em
				\ensuremath{#1}%
				\kern-0.1em
			}%
		}%
	}%
} 
\newcommand*{\colorboxed}{}
\def\colorboxed#1#{%
	\colorboxedAux{#1}%
}
\newcommand*{\colorboxedAux}[3]{%
	\begingroup
	\colorlet{cb@saved}{.}%
	\color#1{#2}%
	\boxed{%
		\color{cb@saved}%
		#3%
	}%
	\endgroup
}
\DeclareFontFamily{U}{mathx}{\hyphenchar\font45}
\DeclareFontShape{U}{mathx}{m}{n}{<-> mathx10}{}
\DeclareSymbolFont{mathx}{U}{mathx}{m}{n}
\DeclareMathAccent{\widebar}{0}{mathx}{"73}
\def \vMAX{\textbf{v}_{\max}}
\def \indikator{\mathbbm{1}}
\def \zeros{\mathbb{O}}
\def \d{\, \textup{d}}
\def \T{\textup{T}}
\def \diag{\textup{diag}}
\def \A{\widehat{\,\mathcal{A}\; }}
\def \JoneDim{\widehat{\;\textup{\textbf{J}}}}
\def \JtildeoneDim{\widetilde{\;\textup{\textbf{J}}}}
\def \J{\widehat{\;\textup{\textbf{J}}}_{\vec{n}} }
\def \Ji{\widehat{\;\textup{\textbf{J}}}_{e_i} }
\def \JC{\widetilde{\;\textup{\textbf{J}}}_{\vec{n}} }
\def \x{\textup{x}}
\def \D{\textup{D}}
\def \GK{\Pi_K}
\def \u{\textup{\textbf{u}}}
\def \wtilde{\widetilde{\textit{\textbf{w}}}}
\def \NumFlux{\widehat{\mathcal{F}}}
\def \NumFluxC{\widetilde{\mathcal{F}}}
\def \NumFluxX{\widetilde{\mathcal{F}}^{(\v)}}
\def \uI{\widehat{\,\textup{\textbf{u}}_1}}
\def \uII{\widehat{\,\textup{\textbf{u}}_2}}
\def \uIII{\widehat{\,\textup{\textbf{u}}_i}}
\def \domain{\mathbb{D}}
\def \ubar{\xbar{\textup{\textbf{u}}}}
\def \DxII{ \Delta y }
\def \DxI{ \Delta x }
\def \ubar{\xbar{\textup{\textbf{u}}}}
\def \ubarL{\xbar{\textup{\textbf{u}}}_{\;\ell}}
\def \ubarR{\xbar{\textup{\textbf{u}}}_{\;r}}
\def \ubarK{\xbar{\textup{\textbf{u}}}_{\;q}}
\def \wbar{\xbar{\textup{\textbf{w}}}}
\def \wbarL{\xbar{\textup{\textbf{w}}}_{\;\ell}}
\def \wbarR{\xbar{\textup{\textbf{w}}}_{\;r}}
\def \wbarK{\xbar{\textup{\textbf{w}}}_{\;q}}
\def \wbar{\xbar{\textup{\textbf{w}}}}
\def \w{\widehat{\textup{\textbf{w}}}}
\def \P{\mathcal{P}}
\def \DP{\mathcal{D}}
\def \VP{\mathcal{V}}
\def \sign{\textup{sign}}
\def \ui{\widehat{\,\u_i}}
\def \v{\widehat{\textup{\textbf{v}}}}
\def \H{\widehat{\textup{\textbf{H}}}}
\def \Legendre{\phi}
\def \Feffective{\widetilde{F}^{(\textup{eff})}}
\def \UNIT{\widehat{\;e_1}}
\def \treshold{\textup{T}}
\DeclareMathOperator*{\argmin}{argmin}
\definecolor{gruen}{rgb}{0,0.5,0}
\definecolor{myred}{rgb}{1,0.3,0.1} 
\definecolor{myblue}{rgb}{0.196,0.196,0.694}
\begin{document}

 \title{Description of random level sets \\by polynomial chaos expansions}


\author{Markus Bambach\thanks{ETH Z\"urich, 8005 Z\"urich, Switzerland, mbambach@ethz.ch}
	\and Stephan Gerster\thanks{University of Mainz,  55122 Mainz, Germany, stephan.gerster@gmail.com} 
	\and  Michael Herty\thanks{RWTH Aachen University, 52062 Aachen, Germany, herty@igpm.rwth-aachen.de}
	\and {Aleksey~Sikstel}\thanks{TU Darmstadt, 64289 Darmstadt, Germany, sikstel@mathematik.tu-darmstadt.de}}


\pagestyle{myheadings} \markboth{Description of random level sets by polynomial chaos expansions}{Markus Bambach, Stephan Gerster, Michael Herty, Aleksey Sikstel} \maketitle

\begin{abstract}
We present a novel approach to determine the evolution of level sets under uncertainties in their velocity fields. This leads to a stochastic description of  level sets.  
To compute the quantiles of random level sets, we use the stochastic Galerkin method for a hyperbolic reformulation of the equations for the propagation of level sets. A novel intrusive Galerkin formulation is presented and proven to be hyperbolic. It induces a corresponding finite-volume scheme that is specifically taylored to uncertain  velocities.
\end{abstract}

\begin{keywords}
Level sets, uncertainty quantification, Hamilton-Jacobi equations, hyperbolic conservation laws, stochastic Galerkin,  finite-volume method
\end{keywords}

 \begin{AMS}
	35F21; 37L45; 60D05; 60H15
\end{AMS}

\section{Introduction}
The tracking and representation of moving interfaces is of interest in numerous applications ranging from material science~\cite{Imran2021}, chemical simulations~\cite{Sethian1999} to  fluid-dynamics~\cite{Sethian2003}.  
Among others, level set methods~\cite{Dervieux1980,Osher1988,Sethian1999} are used to tackle these problems. 
The main idea is to describe a moving interface as the zero-level set of a so-called level set function. The moving boundary of this level set is 
then described by a partial differential equation (PDE).  
The PDE that describes the level sets is a Hamilton-Jacobi equation. Those are equivalent to a hyperbolic form in the sense of viscosity solutions~\cite{Crandall1983,Subbotina2017,Caselles1992}. 
The hyperbolic form is often preferred for numerical purposes. However, also parabolic approximations are considered~\cite{Beckermann2007} that model more diffusive interfaces.

Although level set equations have been studied intensively since the fundamental works~\cite{Dervieux1980,Osher1988} in the 1980s, there are several open problems in the context of 
controllability, robustness and uncertainty quantifications, which 
remain an active field of research~\cite{Preusser,Nouy2009,Pettersson2019}. 

This work contributes to questions of uncertainty and robustness. 
For instance, 
model parameters may be uncertain
due to noisy measurements. Mathematical models do not exactly describe the true physics due to epistemic uncertainties, e.g.~in constitutive equations for material models. In particular, if the uncertainty affects the level set, a challenge occurs, since the  use of stochastic level sets  leads to stochastic domain boundaries. Hence, the zero-level set is not a single closed curve anymore. Instead, there may be a band of possibly arbitrary thickness which contains all possible locations of the random zero-level set~\cite{Preusser}. 
Therefore,  meaningful statistics are of interest as e.g.~confidence bounds, mean, variance and higher moments of the location of the boundary. To compute these statistics, the whole probability distribution must be available.

Typically, 
Monte-Carlo and stochastic collocation methods~\cite{S4} are used to quantify uncertainties. Then, a deterministic problem is solved for each realization. However, in each time step 
only the solution corresponding to a particular sample
or quadrature point is available. 
Only after all simulations are completed, the statistics can be determined. 

In many engineering applications, for instance forming processes~\cite{Imran2021}, the statistics of interest must be computed online. To this end, we follow an intrusive stochastic Galerkin approach. 
The functional dependence of the level set function on the stochastic input is described a priori by a series expansion and a 
Galerkin projection is used to obtain deterministic evolution equations for the coefficients of this series. This approach has been applied to the \emph{parabolic} approximation~\cite{Beckermann2007,Preusser} of random level set equations. 

The aim of this work is to introduce a theoretical framework for the  \emph{hyperbolic} formulation. 
It is well-known that the stochastic Galerkin method does not necessarily transfer hyperbolicity to the Galerkin formulation. 
Still, there are  successful applications to many scalar conservation laws,  when the resulting Jacobians of the flux are symmetric. Then, well-balanced schemes have been developed~\cite{H5} and  a maximum-principle has been ensured~\cite{kusch2019maximum}. 
However, the level set equations with uncertain velocity may result --- even in the one-dimensional scalar case --- in a non-symmetric Jacobian of the stochastic Galerkin system. Furthermore, a Hamilton-Jacobi form in \emph{multiple} spatial dimenions, results in a \emph{system} of hyperbolic equations. 

There are many examples that show  loss of hyperbolicity, when the stochastic Galerkin approach is applied directly to hyperbolic systems. 
To this end, auxiliary variables have been introduced to established wellposedness results. For instance, entropy-entropy flux pairs can be obtained by an expansion in entropy variables, i.e.~the gradient of the deterministic entropy~\cite{H0,H1}.  
Roe variables, which include the square root of the density, preserve hyperbolicity for Euler equations~\cite{Roe1,S5,GersterJCP2019}. 
One drawback of introducing auxiliary variables is an additional computational overhead that arises from an optimization problem, which is required to calculate the auxiliary variables. Recently, a hyperbolic stochastic Galerkin for  the shallow water equations has been presented that  neither requires  auxiliary variables nor any transform, since the  
Jacobian is shown to be similar to a symmetric 
matrix~\cite{Epshteyn2021,Epshteyn2022}. 


To establish hyperbolicity for the level set equations in multiple dimensions with uncertainties in velocities, we follow a strategy that is based both on the introduction of auxiliary variables and on the symmetrization of the Jacobian~\cite{Epshteyn2021,Epshteyn2022}. 
First, an expression for the Euclidean norm is presented. It is \emph{fully intrusive} by which we mean that all integrals are \emph{exactly} computed in a precomputation step and not during a simulation. 
The intrusive expression is inspired by the concept of Roe variables in~\cite{S5}, namely to use the square root as an auxiliary variable, which is obtained by minimizing a convex function~\cite{GersterHertyCicip2020}. 
Our main theorem  
states matrix similarities for a conservative and a capacity form. Both formulations are hyperbolic if the randomness arises only from initial values. 
In the context of level set equations, however, uncertain velocities are of interest that may lead to complex characteristic speeds in a conservative formulation. This issue is circumvented by the capacity form  that ensures hyperbolicity even for random velocities. 
Those pose also serious challenges in the computational approximation. 
The numerical discretization, restricted by the CFL-condition, must account for all appearing wave speeds. 
However, applying the stochastic Galerkin method to random velocities, results in additional waves that are taken into account by  \emph{non-uniform effective grids}.
Here, we will follow the concept of a \emph{capacity-form differencing}~scheme~\cite[Sec.~6.16, Sec.~6.17]{Leveque}. 
The idea is to introduce an \emph{effective non-uniform} space discretization that is related to a \emph{uniform computational} domain by a coordinate transform, which is indirectly given by the waves that result from random velocities.

The remainder of this section considers a motivating example to material deforming. Formulations in Hamilton-Jacobi and hyperbolic form are presented and the  new concept of random level sets is proposed for  two-dimensional level set equations. 

\subsection{Grain size evolution}
Let $\domain\subseteq\mathbb{R}^2$ be a domain (the workpiece) that is deformed by deformation processing at elevated temperatures. The deformation process yields the evolution of a microstructural feature $g$ (grain size) in the material, which is a scalar value attached to each material point. During the deformation process, the grain size $g$ evolves in such a way that the domain $\domain = \domain_0\cup \domain_1$ can be split into subregions~$
 \domain_0\coloneqq \big\{
\x \in\mathbb{R}^2
\; \big| \; g(\x)<g^* 
 \big\} 
 $ 
 and~$
 \domain_1\coloneqq \big\{
 \x \in\mathbb{R}^2
 \; \big| \; g(\x)\geq g^* 
 \big\} 
 $. 
In a two-dimensional model the \textbf{zero-level set} 
$$
\Gamma(t)\coloneqq
\Big\{ 
\x \in\mathbb{R}^2
\ \Big| \
\varphi(t,\x) = 0
\Big\}
=
\partial \domain_0 \cap \partial \domain_1
$$
is implicitly described by a \textbf{level set function}~$\varphi \; :\; \mathbb{R}^2\rightarrow \mathbb{R}$. For instance, when $g(\x)$ describes the grain size, it is given by the partial differential equation
$$
\partial_t \varphi(t,\x) + \nabla_{\x} g(\x) \cdot \nabla_{\x} \varphi(t,\x) = 0
\quad\text{with initial data}\quad
 \varphi(0,x) = \varphi_0(x).$$
Typically, 
only displacements along the normal direction of the level set are of interest. Hence, we replace the Jacobian of the grain size by
\begin{equation}\label{displacement}
\nabla_{\x} g(\x)=v(\x) \frac{ \nabla_{\x} \varphi(t,\x)}{\big\lVert\nabla_{\x} \varphi(t,\x) \big\rVert},
\end{equation}
where~$\lVert\cdot\rVert$ denotes the Euclidean norm  and~$v(\x)\in \mathbb{R}$ the velocity. 
We obtain the \textbf{level set equations} as a Hamilton-Jacobi equation
\begin{equation}\label{detLevelSet}
\partial_t \varphi(t,\x) + v(\x) \big\lVert \nabla_{\x} \varphi(t,\x) \big\rVert   = 0.
\end{equation}


\noindent
The previous application motivates the more general discussion.  
Following the notation in~\cite{HamiltonJacobiJin},  we consider the random
\begin{alignat*}{8}
&\textbf{Hamilton-Jacobi equations}\quad
&&\partial_t \varphi(t,\x)
&&+
H\Big( \nabla_{\x} \varphi(t,\x),\x \Big)&&=0, \\
&\textbf{hyperbolic conservation laws}\quad
&&\partial_t \u(t,\x)
&&+
\nabla_{\x}
H\Big(  \u(t,\x),\x\Big)&&=0,
\quad
\u(t,\x) \coloneqq \nabla_{\x}  \varphi(t,\x)
\end{alignat*}
for one~$x\in\mathbb{R}$ and two dimensions~$\x=(x_1,x_2)\in\mathbb{R}^2$.
In the deterministic case, solutions to those are equivalent in the sense of viscosity solutions~\cite{Caselles1992,Jin1998}. Clearly, with the choice of the Hamiltonian
\begin{equation}\label{RandomHamiltonian}
H(\u,\x)
=
 v(\x) \lVert  \u \rVert 
\end{equation}
the level set equation~\eqref{detLevelSet} is recovered. 

\bigskip
\begin{remark}
In fact, the hyperbolic formulation can be interpreted also as a \emph{system} of \emph{hyperbolic balance laws} that read in the deterministic case as
\begin{equation}\label{Strongly}
\partial_t 
\begin{pmatrix}
\u(t,\x) \\ \varphi(t,\x) 
\end{pmatrix}
+
\nabla_{\x}
\begin{pmatrix}
H\big(  \u(t,\x),\x \big) \\
0
\end{pmatrix}
=-
\begin{pmatrix}
0 \\
H\big(  \u(t,\x),\x \big) 
\end{pmatrix}.
\end{equation}
As remarked in~\cite{HamiltonJacobiJin}, the balance law~\eqref{Strongly} is a \textbf{strongly hyperbolic system} in the sense that characteristic speeds are real and the
Jacobian of the flux function admits a complete set of eigenvectors~\cite{Godlewski1998}. The quasilinear form  reads as
\begin{equation}\label{quasilinear}
\begin{aligned}
&\partial_t 
\u(t,\x) 
+
\sum\limits_{i=1}^2
H_i'\big(  \u(t,\x),\x \big) 
\nabla_{\x} \u_i(t,\x)
=-
\nabla_{\x}
H(  \u,\x )\big|_{\u=\u(t,\x)} \\
&\text{for}\quad
H_i'\big(  \u(t,\x),\x \big)  
\coloneqq
\partial_{\u_i}
H\big(  \u,\x \big)\big|_{\u=\u(t,\x)}  .
\end{aligned}
\end{equation}

\end{remark}


\noindent
For this particular example, the level set equations in two dimensions, i.e.~$\x=(x_1,x_2)^\T $ and $\u(t,\x)\in\mathbb{R}^2$, read in hyperbolic form  as
\begin{equation}\label{2dLevelSet}
	\begin{aligned}
		&\partial_t \u(t,\x)
		+
		\partial_{x_1}
		f_1\big(\u(t,\x),v(\x)\big)
		+
		\partial_{x_2}
		f_2\big(\u(t,\x),v(\x)\big)
		=0 \\
		& 
		\text{with flux functions}\quad
		f_1(\u,v)
		=
		\begin{pmatrix}
			v \lVert \u \rVert \\ 0
		\end{pmatrix}
		\quad\text{and}\quad
		f_2(\u,v)
		=
		\begin{pmatrix}
			0 \\
			v \lVert \u \rVert
		\end{pmatrix}.
	\end{aligned}
\end{equation}

\noindent
The two-dimensional system~\eqref{2dLevelSet} is hyperbolic in the sense that for all unit vectors~${\vec{n}=(n_1,n_2)^\T}$ the matrix
\begin{equation}\label{DeterministicJacobian}
	n_1 \D_{\u} f_1(\u,v) 
	+ 
	n_2 \D_{\u} f_2(\u,v)
	=
	\frac{v}{\lVert \u \rVert}
	\begin{pmatrix}
		n_1 \u_1 & n_1 \u_2 \\
		n_2 \u_1 & n_2 \u_2
	\end{pmatrix}
\end{equation}
is diagonalizable with real eigenvalues and a complete set of eigenvectors. 
Note that the Jacobian~\eqref{DeterministicJacobian} reduces in the spatially one-dimensional case to~$
\D_{\u} f(\u,v) = v\, \partial_{\u} |\u|
$. 
Here, the derivative of the norm is understood as generalized gradient~\cite{Clarke1990,Frankowska1989, LipschitzFlux2001}. 

\subsection{Description of random level sets}
The ansatz~\eqref{displacement} heavily depends on an appropriate choice of the displacements~$v(\x,\xi)$ that are subject to uncertainties. Those are summarized in a random variable $\xi \, :\, \Omega\rightarrow \mathbb{R}$ that is defined on a probability space~$\big(\Omega,\mathcal{F}(\Omega),\mathbb{P}\big)$. 
Then, the displacement is for each fixed point in space also a random variable with realizations~$v\big(\x,\xi(\omega)\big)\in \mathbb{R}$ for $\omega\in\Omega$.
Since the level set function~$\varphi(t,\x,\xi)$ is random as well, the deterministic zero-level set must be extended to the stochastic case. We propose to consider the \textbf{quantile of the perturbed level set} 
\begin{equation}\label{randomLS}
\widehat{\Gamma_{\varepsilon,p}}(t)
\coloneqq
\Big\{ 
\x \in\mathbb{R}^2
\ \Big| \
\mathbb{P}\Big[
\big|\varphi(t,\x,\xi)\big| \leq \varepsilon
\Big] \geq p
\Big\}.
\end{equation}
To compute the probability in the set~\eqref{randomLS} the whole probability distribution of the level set function must be available at each time step, which motivates the stochastic Galerkin method. 
We start in Section~\ref{SectionGPC} from the hyperbolic form. Two stochastic Galerkin formulations are derived and analyzed in terms of hyperbolicity, namely in a \emph{conservative} and \emph{capacity} form. 
The main theorem ensures that the capacity form is always hyperbolic. 
Section~\ref{SectionFV} presents a hyperbolicity preserving finite-volume discretization that converges to the derived capacity form.

\section{Intrusive formulation}\label{SectionGPC}
To account for the random and space-dependent Hamiltonian~$
H(\u,\x,\xi)
=
v(\x,\xi) \lVert  \u \rVert 
$, 
the deterministic formulations are equipped with a random variable~$\xi$, i.e.
$$
\partial_t \varphi(t,\x,\xi)
+
H\Big( \nabla_{\x} \varphi(t,\x),\x,\xi \Big)=0
\quad\text{and}\quad
\partial_t \u(t,\x,\xi)
+
\nabla_{\x}
H\Big(  \u(t,\x),\x,\xi\Big)=0.
$$
The dependency of the solutions $\u(t,x,\cdot)$ and $\varphi(t,x,\cdot)$ on the stochastic input $\xi$ is described a priori in terms of orthogonal functions. 
For instance, 
\textbf{normalized Legendre polynomials} with uniform distribution~$\xi\in\mathcal{U}(0,1)$  are 
 recursively defined by
$$
\Legendre_0(\xi) = 1,\quad
\Legendre_1(\xi) =  \sqrt{3}  \xi,\quad
\Legendre_{k+1}(\xi) 
= 
\frac{\sqrt{2k+3}}{k+1}
\bigg(
\sqrt{2k+1}
\xi \Legendre_{k}(\xi) - \frac{k}{\sqrt{2k-1}} \Legendre_{k-1}(\xi) 
\bigg).
$$ 
\textbf{Hermite polynomials}  are given  by
$$
\widetilde{\phi}_0
=
1, 
\quad
\widetilde{\phi}_1
=
\xi, 
\quad
\widetilde{\phi}_{k+1}
=
\xi \widetilde{\phi}_k(\xi)
-
k\widetilde{\phi}_{k-1}
\quad
\text{with normalization}
\quad
\phi_k(\xi) \coloneqq
(k! )^{-\nicefrac{1}{2}}
\widetilde{\phi}_k(\xi)
$$
and 
are orthogonal to the normal distribution~$\xi\sim \mathcal{N}(0,1)$. 
More precisely, we have
$$
\mathbb{E}\Big[
\phi_i(\xi) \phi_j(\xi)
\Big]
=
\int \phi_i(\xi) \phi_j(\xi) \d \mathbb{P}
\eqqcolon
\langle \phi_{i}, \phi_{j}  \rangle_{\mathbb{P}}
=
\delta_{i,j}. 
$$
Then, a random state $\u(\xi)$ with gPC modes $\widehat{\u}\in\mathbb{R}^{K+1}$ is approximated by
$$
\Pi_K\big[\widehat{\u}\big](\xi)
\coloneqq
\sum\limits_{k=0}^K
\widehat{\u}_k \phi_k(\xi)
\quad\text{satisfying}\quad
\Big\lVert 
\Pi_K\big[\widehat{\u}\big](\xi) 
-
\u(\xi)
\Big\rVert_{\mathbb{P}}
\rightarrow 0
\quad\text{for}\quad
K\rightarrow \infty. 
$$
Similarly to~\cite{S15,S4,S18}, we express products and the second moment by
\begin{equation} \label{2ndMoment}
\begin{aligned}
&\widehat{\u}
\ast
\widehat{\textbf{q}}
\coloneqq
\P(\widehat{\u})
\widehat{\textbf{q}}
\qquad
\text{and}
\qquad
\widehat{\u}^{\ast 2}
\coloneqq 
\mathcal{R}(\widehat{\u})
\coloneqq
\P(\widehat{\u}) \widehat{\u} \\
&\text{for}\quad
\P(\widehat{\u})\coloneqq\sum\limits_{k=0}^K \widehat{\u}_k\mathcal{M}_k,
\quad
\mathcal{M}_k\coloneqq
\Big(
\langle \phi_k,\phi_j\phi_i \rangle_{\mathbb{P}}\Big)_{i,j=0,\ldots,K}.
\end{aligned}
\end{equation}
More precisely, the stochastic Galerkin matrix~\eqref{2ndMoment} defines a linear operator ${\P \ : \
\mathbb{R}^{|\widehat{\u}|} \mapsto 
\mathbb{R}^{|\widehat{\u}| \times |\widehat{\u}|}, \
\widehat{\u}
\mapsto
\P(\widehat{\u}) 
}$, where~$|\widehat{\u}|$ denotes the number of gPC modes. Hence, we use the notation~$\P(\widehat{\u}) \in\mathbb{R}^{d(K+1) \times d(K+1)  } $  for both the one-dimensional case with gPC modes~$\widehat{\u}= \uI\in\mathbb{R}^{K+1} $ and for two dimensions with~$\widehat{\u}= (\uI,\uII)^\T \in\mathbb{R}^{2(K+1)} $.  
According to~\cite[Lem.~3.1]{GersterHertyCicip2020},  
the random Euclidean norm~$\big\lVert \u(\xi) \big\rVert$  can be approximately represented in terms of the vector
\begin{align}
&\widehat{ \,
\lVert \u \rVert
\, }
\coloneqq
\mathcal{R}^{-1}\left(
\sum\limits_{i=1}^d
\uIII^{\ast 2}	
	\right)
=
\argmin
\Bigg\{
\frac{\hat{\alpha}^\T \P(\hat{\alpha}) \hat{\alpha} }{3}
\ - \
\hat{\alpha}^\T \,
\sum\limits_{i=1}^d
\uIII^{\ast 2}
\Bigg\} \label{ARGMIN}\\
&\text{satisfying}\quad
\widehat{ \,
	\lVert \u \rVert
	\, }
\ast
\widehat{ \,
	\lVert \u \rVert
	\, }
=
\mathcal{R}\Big( \widehat{ \,
	\lVert \u \rVert
	\, } \Big)
\quad
\text{and}
\quad
\mathbb{E}\bigg[\Big(
\Pi_K\left[
\widehat{ \,
	\lVert \u \rVert
	\, }
\right](\xi)
-
\big\lVert \u(\xi) \big\rVert
\Big)^2\bigg]
\overset{K\rightarrow \infty }{\longrightarrow }0,
\end{align}
where $\mathcal{R}^{-1} \,:\, \mathbb{R}^{K+1} \rightarrow \mathbb{R}^{K+1}$ is the inverse mapping of the second moment~\eqref{2ndMoment}. 
More precisely, see~\cite[Lem.~3.1]{GersterHertyCicip2020},  the mapping~$\mathcal{R}$ is bijective provided that the matrix~
\begin{equation}\label{AI}
\P\Big(
\widehat{ \,
	\lVert \u \rVert
	\, }
\Big)
\quad
\text{is strictly positive definite.}
\end{equation}

\noindent
Figure~\ref{ARGMIN} illustrates the calculation of the gPC modes~\eqref{ARGMIN} for an expansion with two basis functions, i.e.~$K=1$. Normalized Legendre, Hermite polynomials and the Haar basis satisfy in this special case the expressions
\begin{equation}\label{ExpressionsFigure}
\begin{aligned}
&\P(\hat{\alpha})
= 
\begin{pmatrix}
\hat{\alpha}_0 & \hat{\alpha}_1 \\
\hat{\alpha}_1 & \hat{\alpha}_0 \\ 
\end{pmatrix},
\quad
\mathcal{R}(
\widehat{\u}
)
=
\begin{pmatrix}
\widehat{\u}_0^2+\widehat{\u}_1^2 \\
2 \, \widehat{\u}_0 \widehat{\u}_1
\end{pmatrix},
\quad
\mathcal{R}^{-1}(
\widehat{\rho}
)
=
\frac{1}{2}
\begin{pmatrix}
\sqrt{ \widehat{\rho}_0 + \widehat{\rho}_1 } 
+
\sqrt{ \widehat{\rho}_0 - \widehat{\rho}_1 }  \\
\sqrt{ \widehat{\rho}_0 + \widehat{\rho}_1 }
-
\sqrt{ \widehat{\rho}_0 - \widehat{\rho}_1 }
\end{pmatrix}, \\
&\text{and}\quad
\quad
\widehat{ \,
	\lVert \u \rVert
	\, }
=
\frac{1}{2}
\begin{pmatrix}
\big| \widehat{\u}_0 + \widehat{\u}_1 \big|  
+
\big| \widehat{\u}_0 - \widehat{\u}_1 \big|  \\
\big| \widehat{\u}_0 + \widehat{\u}_1 \big|  
-
\big| \widehat{\u}_0 - \widehat{\u}_1 \big|  
\end{pmatrix}.
\end{aligned}
\end{equation}

\noindent
The first three panels of Figure~\ref{ARGMIN} illustrate 
these mappings. 
The $x$-axes describe the first mode of the  inverse images. 
The second modes of the inverse images, which account for the random perturbations, are chosen as~$\widehat{\u}_1,\widehat{\rho}_1\in[-1,1]$. This results in different values that are given by two colorbars. The first one on the left hand side states the first mode and the second colorbar states the second mode for the expressions~\eqref{ExpressionsFigure}.   
Furthermore, areas corresponding to the first modes are highlighted by  lines,  those of the second one are illustrated with dashed lines, respectively. 

The first panel shows the mapping~$
\mathcal{R}(\widehat{ \u })$, where the $x$-axis corresponds to the mean~$\widehat{\u}_0\in\mathbb{R}$.   The first mode~$\mathcal{R}(\widehat{ \u })_0\geq 0$ states the positive expected value with respect to the left colorbar 
and the second mode accounts for random perturbations. 
The second panel states the inverse mapping~$\mathcal{R}^{-1}$, which is used to obtain an expression for the square root. 
It is only defined for~$\widehat{\rho}_0 \geq |\widehat{\rho}_1|$. In this case, perturbations are sufficiently small compared to the positive mean~$\widehat{\rho}_0\geq 0$. 
The third panel shows the solution to the minimization problem~\eqref{ARGMIN}.

\vspace{5mm}
\begin{figure}[H]
	\begin{minipage}{0.49\textwidth}
		\begin{center}	
			\  \textbf{(i)}\ \textbf{Galerkin product:} $\mathcal{R}(
			\widehat{\u}
			)$
			
\vspace{-0mm}					
{\small $|\widehat{\u}_1|=1$}
\hfill
{\small $\widehat{\u}_1=1$}

\vspace{-1mm}
			\scalebox{1}{\includegraphics[width=\linewidth]{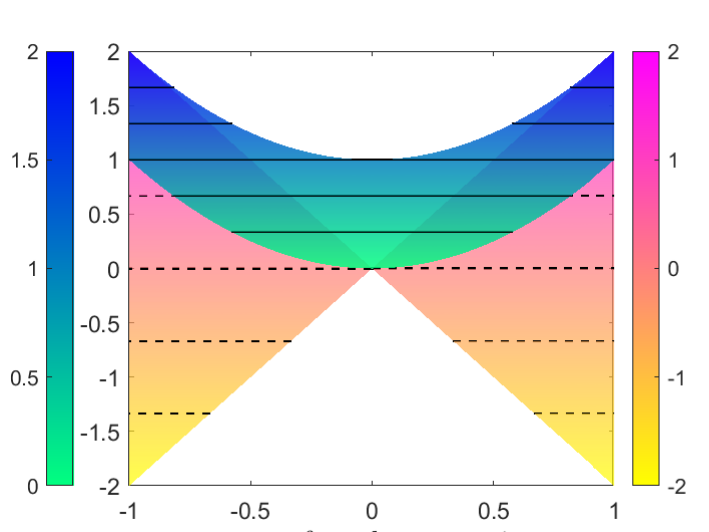}}

\vspace*{-2mm}						
{\small $\widehat{\u}_1=0$} \hfill {\small $\widehat{\u}_1=-1$}	

\vspace*{-1mm}
			{\small mean $\widehat{\u}_0$}	
		\end{center}	
	\end{minipage}
	\hfil
	\begin{minipage}{0.49\textwidth}
		\begin{center}	
	\ \ 	\textbf{(ii)}\	\textbf{Galerkin root:} $\mathcal{R}^{-1}
(\widehat{\rho} )
$

\vspace{-0mm}					
\ \ {\small $|\widehat{\rho}_1|\leq1$}
\hfill
{\small  $\widehat{\rho}_1=1$}

\vspace{-1mm}			
			\scalebox{1}{\includegraphics[width=\linewidth]{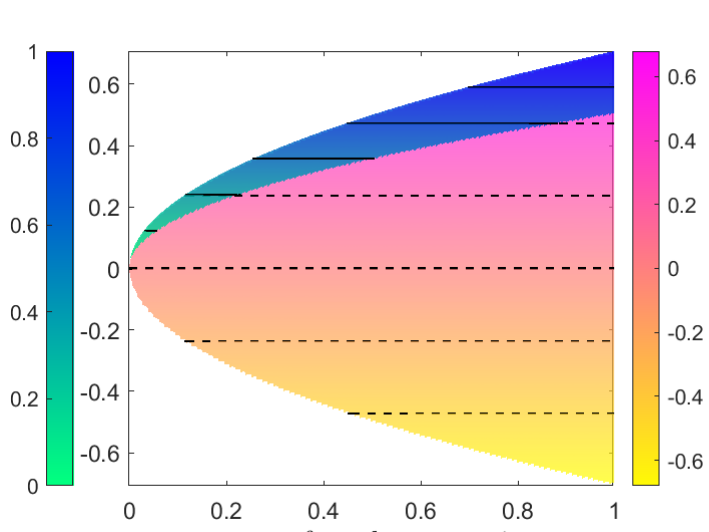}}

\vspace*{-2mm}						
\ \ {\small $\widehat{\rho}_1=0$} \hfill {\small $\widehat{\rho}_1=-1$}	

\vspace*{-1mm}			
			{\small mean $\widehat{\rho}_0=(\widehat{\u}\ast\widehat{\u})_0$}
			
		\end{center}	
	\end{minipage}

	\vspace{6mm}
	
	\begin{minipage}{0.49\textwidth}
		
		\begin{center}	
			\ \textbf{(iii)}\  \textbf{Euclidean norm:} $\widehat{ \,
				\lVert \u \rVert
				\, }$

\vspace{-0mm}			
{\small $|\widehat{\u}_1|=1$}
\hfill
{\small $\widehat{\u}_1=1$}

			\vspace{-1mm}			
			\scalebox{1}{\includegraphics[width=\linewidth]{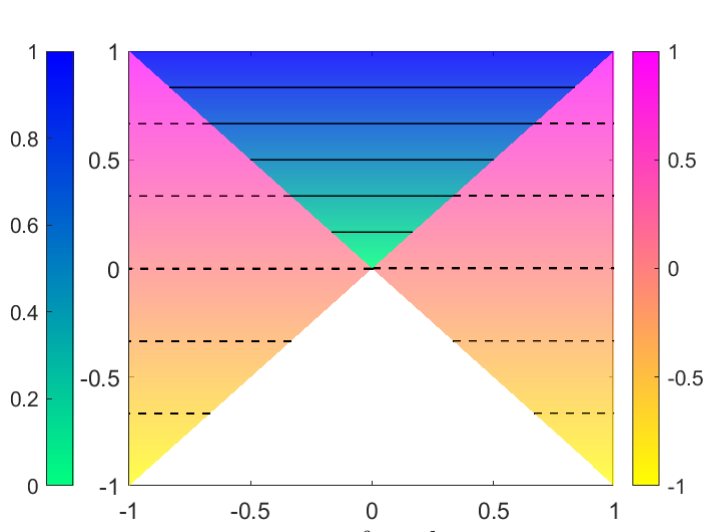}}
			
\vspace*{-2mm}						
{\small $\widehat{\u}_1=0$} \hfill {\small $\widehat{\u}_1=-1$}	

\vspace*{-1mm}
{\small mean $\widehat{\u}_0$}	
			
		\end{center}	
	\end{minipage}
	\hfil
	\begin{minipage}{0.49\textwidth}
		
		\vspace{-2mm}
		\begin{center}
\textbf{(iv)}\			\textbf{Optimization problem~\eqref{ARGMIN}} 
			\vspace*{0.62cm}
			
			\scalebox{1}{\includegraphics[width=\linewidth]{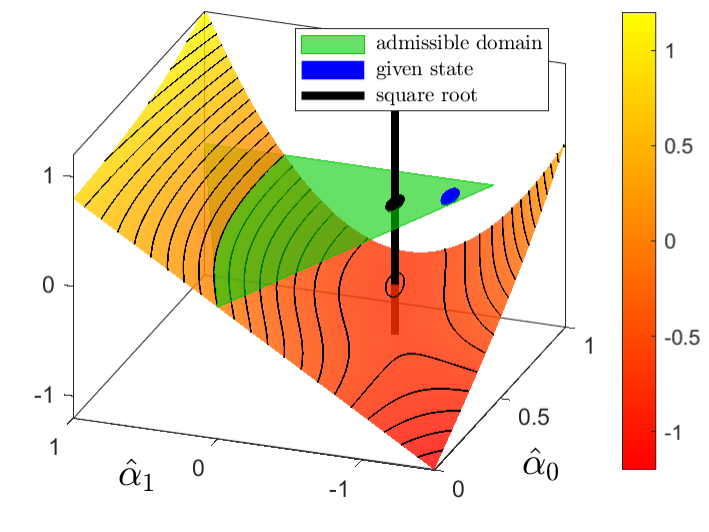}}	
			
		\end{center}
	\end{minipage}
	
	\caption{Illustration of the norm~\eqref{ARGMIN} by means of the expressions~\eqref{ExpressionsFigure},  shown in the panels~(i)~--~(iii). 
	The values stated by the colorbars are as follows: 
	First panel illustrates the modes~$\mathcal{R}(
	\widehat{\u}
	)_0$~(left colorbar) and~$\mathcal{R}(
	\widehat{\u}
	)_1$~(right colorbar) for the Galerkin product, the second panel shows the inverse mapping~$\mathcal{R}^{-1}(
	\widehat{\u}
	)_0$~(left) and~$\mathcal{R}(
	\widehat{\u}
	)_1$~(right). 
	The gPC modes of the Euclidean norm are stated in the third panel, where the first one is shown at the scale of the left colorbar, the second one with respect to the right colorbar. 
	In the first and third panel, the colour blue corresponds to~$|\widehat{\u}_1|=1$ and to~$|\widehat{\rho}_1|\leq1$ in the second panel. Likewise, the other cases are stated above and below the colorbars. 
	Panel~(iv) considers the optimization problem~\eqref{ARGMIN} for a fixed state~$\widehat{ \rho }=(0.89,-0.8)^\T$.}

\end{figure}

The fourth panel states the function 
$\nicefrac{1}{3}\;\hat{\alpha}^\T \P(\hat{\alpha}) \hat{\alpha} - \hat{\alpha}^\T \widehat{ \rho }$. 
It is convex on the green highlighted set, where the condition~$\hat{\alpha}_0 \geq |\hat{\alpha}_1|$ holds. 
For a fixed state~$\widehat{ \rho }=\widehat{\u}^{\ast 2}$ its minimum gives the desired gPC modes that are illustrated in the third panel. This also shows that the optimization problem is well-posed apart from states that are close to zero or result in too large random fluctuations. In those cases, assumption~\eqref{AI} is violated. \\

\noindent
The optimization problem~\eqref{ARGMIN} allows to extend the Hamiltonian~\eqref{RandomHamiltonian} to a stochastic Galerkin formulation. To this end, we assume an arbitrary, but consistent gPC approximation~$\v(\x)$ satisfying
$$
\bigg\lVert
v(\x,\xi)
-
\sum\limits_{k=0}^K
\v_{k}(\x)\phi_{k}(\xi)
\bigg \rVert_{\mathbb{P}}
\rightarrow 0
\quad\text{for}\quad
K\rightarrow\infty. 
$$
The gPC formulation for the random Hamiltonian reads as
\begin{equation}\label{HamiltonianLS}
\H(\widehat{\u},\x)
\coloneqq
\v(\x)
\ast
\widehat{\, \lVert \u \rVert\, }.
\end{equation}
The corresponding stochastic Galerkin formulations to the Hamilton-Jacobi equations~\eqref{HJ} and the hyperbolic conservation laws~\eqref{HC} are
\begin{alignat}{8}
&\partial_t \widehat{\varphi}(t,\x)
&&+
\H\Big( \nabla_{\x} \widehat{\varphi}(t,\x),\x \Big)&&=0,
\tag{\textup{HJ}}
\label{HJ} \\
&\partial_t \widehat{\u}(t,\x)
&&+
\nabla_{\x}
\H\Big(  \widehat{\u}(t,\x),\x \Big)&&=0,
\quad
\widehat{\u}(\x) \coloneqq \nabla_{\x}  \widehat{\varphi}(\x). \label{HC} \tag{\textup{HC}}
\end{alignat}
In the case of deterministic initial data~$\varphi_0(\x)\in\mathbb{R}$ we have 
\begin{align*}
	\widehat{\varphi}_{k}(0,\x) = \varphi_0(\x) \delta_{k,0} \quad \text{and} \quad 
	 \widehat{\u}_{k}(0,\x) = \nabla_{\x} \varphi_0(\x) \delta_{k,0}.
\end{align*}
In the sequel, the hyperbolic form~\eqref{HC} with the Hamiltonian~\eqref{HamiltonianLS}, which corresponds to the random level set equations, is analyzed in terms of hyperbolicity. 

\subsection{Eigenvalue decomposition and hyperbolicity}
We distinguish between the following two formulations, which are formally equivalent provided that the matrix~$\P\big( \v(\x) \big)$ is invertible, but result in different theoretical and numerical solution concepts. \\
 
\noindent
\textbf{Conservative form}\vspace{4mm}

\noindent
$\displaystyle
\begin{aligned}
&\partial_t \widehat{\u}(t,\x)
+
\partial_{x_1}
\widehat{f_1}\Big(\widehat{\u}(t,\x),\v(\x)\Big)
+
\partial_{x_2}
\widehat{f_2}\Big(\widehat{\u}(t,\x),\v(\x)\Big)
=0 \\
& 
\text{with flux functions}\quad
\widehat{f_1}\big(\widehat{\u},\v\big)
=
\begin{pmatrix}
\v \ast
\widehat{ \,
	\lVert \u \rVert
	\, } \\ 0
\end{pmatrix}
\quad\text{and}\quad
\widehat{f_2}\big(\widehat{\u},\v\big)
=
 \begin{pmatrix}
0 \\
\v \ast
\widehat{ \,
	\lVert \u \rVert
	\, }
\end{pmatrix}
\end{aligned} \label{2dLevelSetgPC}
$

\bigskip
\noindent
\textbf{Capacity form}\vspace{4mm}

\noindent
$\displaystyle
\begin{aligned}
&\P\big( \v(\x) \big)^{-1}
\partial_t \widehat{\u}(t,\x)
+
\partial_{x_1}
\widetilde{f_1}\Big(\widehat{\u}(t,\x)\Big)
+
\partial_{x_2}
\widetilde{f_2}\Big(\widehat{\u}(t,\x)\Big)\\
&\qquad\qquad =
-
\P\big( \v(\x) \big)^{-1}
\big(\partial_{x_1}+\partial_{x_2} \big)\v(\x)
\ \ast
\widehat{ \
	\lVert \u(t,\x) \rVert
	\ }
 \\
& 
\text{with flux functions}\quad
\widetilde{f_1}\big(\widehat{\u}\big)
=
\begin{pmatrix}
\widehat{ \,
	\lVert \u \rVert
	\, } \\ 0
\end{pmatrix}
\quad\text{and}\quad
\widetilde{f_2}\big(\widehat{\u}\big)
=
\begin{pmatrix}
0 \\
\widehat{ \,
	\lVert \u \rVert
	\, }
\end{pmatrix}
\end{aligned}
\label{capacityForm}
$

\bigskip

\noindent
The following main theorem investigates the eigenvalue decomposition of both formulations. In particular the capacity form is proven  strongly hyperbolic. The proof is inspired by the approach in~\cite{Epshteyn2021,Epshteyn2022}, where the Jacobian is shown to be similar to a symmetric matrix.\\

\begin{theorem}\label{TheoremHyp}
Let  states~$\widehat{\u}=\big(\uI,\uII\big)^\T\in\mathbb{R}^{2(K+1)}$ and gPC modes~${\v} \in\mathbb{R}^{K+1}$ be given such that for all 
unit vectors~$\vec{n}=(n_1,n_2)^\T$ 
the matrices
$
\widehat{\lambda}(\widehat{\u})
\coloneqq
n_1 \P(\widehat{\,\u_1}) + n_2 \P(\widehat{\,\u_2})
$, 
$
\P\big(\v\big)
$
are invertible and such that property~\eqref{AI} is satisfied. 
For all unit vectors~$\vec{n}=(n_1,n_2)^\T$, the following statements hold:\\

\begin{enumerate}[(i)]
	\item
The \textbf{conservative form} has the Jacobian 	\\

$\displaystyle
\begin{aligned}
\J (\widehat{\u},\v)
&\coloneqq
n_1 \D_{\widehat{\u}} \widehat{f_1}(\widehat{\u},\v)
+
n_2 \D_{\widehat{\u}} \widehat{f_2}(\widehat{\u},\v)\\
&\;=
\Big[
\diag\{1,1\}\otimes 
\A(\widehat{\u},\v)
\Big]
\begin{pmatrix}
n_1 \P\big(\widehat{\,\u_1}\big) & n_1 \P\big(\widehat{\,\u_2}\big) \\
n_2 \P\big(\widehat{\,\u_1}\big) & n_2 \P\big(\widehat{\,\u_2}\big) 
\end{pmatrix} 
\end{aligned}
$\\

$\displaystyle
\begin{aligned}
&\text{which is similar to}
&\widehat{\Lambda}(\widehat{\u},\v) 
&\coloneqq
\begin{pmatrix} \zeros & \zeros \\
\zeros & \widehat{\lambda}(\widehat{\u},\v)
\end{pmatrix} \\
&\text{with}
\quad
&\widehat{\lambda}(\widehat{\u},\v)
&\coloneqq
\A(\widehat{\u},\v)
\Big[
n_1 \P(\widehat{\,\u_1}) + n_2 \P(\widehat{\,\u_2})\Big]\\
&\text{and}
\quad
&\A(\widehat{\u},\v)
&\coloneqq
\P\big(\v\big)  \P\Big( \widehat{ \,
	\lVert \u \rVert
	\, } \Big)^{-1}.
\end{aligned}
$ \\

\medskip

	\item
The \textbf{capacity form} has the Jacobian 	\\

$\displaystyle
\begin{aligned}
\JC (\widehat{\u})
&\coloneqq
n_1 \D_{\widehat{\u}} \widetilde{f_1}(\widehat{\u})
+
n_2 \D_{\widehat{\u}} \widetilde{f_2}(\widehat{\u})\\
&\,=
\bigg[\diag\{1,1\}\otimes
 \P\Big( \widehat{ \,
	\lVert \u \rVert
	\, } \Big)^{-1}
\bigg]
\begin{pmatrix}
n_1 \P\big(\widehat{\,\u_1}\big) & n_1 \P\big(\widehat{\,\u_2}\big) \\
n_2 \P\big(\widehat{\,\u_1}\big) & n_2 \P\big(\widehat{\,\u_2}\big) 
\end{pmatrix} 
\end{aligned}
$\\

and is similar to the symmetric matrix
$$
\widetilde{\Lambda}(\widehat{\u}) 
\coloneqq
\begin{pmatrix} \zeros & \zeros \\
\zeros & \widetilde{\lambda}(\widehat{\u})
\end{pmatrix} 
\ \ \ \text{with}\ \ \
\widetilde{\lambda}(\widehat{\u})
\coloneqq
\P\Big( \widehat{ \,
	\lVert \u \rVert
	\, } \Big)^{-\nicefrac{1}{2}}
\Big[
n_1 \P(\widehat{\,\u_1}) + n_2 \P(\widehat{\,\u_2})\Big]
\P\Big( \widehat{ \,
	\lVert \u \rVert
	\, } \Big)^{-\nicefrac{1}{2}}.
$$

\end{enumerate}

\end{theorem}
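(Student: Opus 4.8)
The plan is to reduce the whole statement to a single differentiation identity for the Galerkin norm, to read off the two Jacobians from it by the chain rule, and then to diagonalize the resulting block matrices through an explicit chain of similarity transforms dictated by the unit vector $\vec{n}$.

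\emph{Step 1 (derivative of the Galerkin norm).} The heart of the argument is computing $\D_{\widehat{\u}}\,\widehat{\,\lVert \u \rVert\,}$. I would start from the defining relation \eqref{GalerkinProduct}, written as $\P\big(\widehat{\,\lVert \u \rVert\,}\big)\,\widehat{\,\lVert \u \rVert\,}=\sum_{i}\P(\uIII)\,\uIII$. Because the product $\ast$ comes from multiplying polynomials, the matrices $\mathcal{M}_{\boldsymbol{k}}$ are symmetric and $\P(p)q=\P(q)p$ for all $p,q$; hence the map $z\mapsto\P(z)z=\mathcal{R}(z)$ has derivative $2\P(z)$. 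Differentiating the defining relation with respect to $\uIII$ and using that $\P\big(\widehat{\,\lVert \u \rVert\,}\big)$ is invertible by \eqref{AI} yields
$$\D_{\uIII}\,\widehat{\,\lVert \u \rVert\,}=\P\big(\widehat{\,\lVert \u \rVert\,}\big)^{-1}\P(\uIII).$$
Equivalently, this follows from the inverse function theorem applied to $\mathcal{R}$, whose derivative $2\P\big(\widehat{\,\lVert \u \rVert\,}\big)$ is invertible precisely on the open set where \eqref{AI} holds; this also pins down where the identity is valid, namely away from the non-smooth locus of the norm visible in \eqref{ExpressionsFigure}.

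\emph{Step 2 (assembling the Jacobians).} For the conservative flux $\widehat{f_1}=\big(\v\ast\widehat{\,\lVert \u \rVert\,},0\big)^{\T}=\big(\P(\v)\,\widehat{\,\lVert \u \rVert\,},0\big)^{\T}$, the factor $\P(\v)$ is independent of $\widehat{\u}$, so the chain rule with Step~1 gives the block columns $\A\,\P(\uI)$ and $\A\,\P(\uII)$ with $\A=\P(\v)\P\big(\widehat{\,\lVert \u \rVert\,}\big)^{-1}$; treating $\widehat{f_2}$ the same way and forming $n_1\D_{\widehat{\u}}\widehat{f_1}+n_2\D_{\widehat{\u}}\widehat{f_2}$ reproduces the stated expression for $\J$. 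The capacity flux is the same with the factor $\v\ast(\cdot)$ deleted, so the identical computation yields $\JC$ with $\A$ replaced by $G\coloneqq\P\big(\widehat{\,\lVert \u \rVert\,}\big)^{-1}$.

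\emph{Step 3 (similarity to block-diagonal form).} Both Jacobians have the shape $\big[\diag\{1,1\}\otimes G\big]M$ with $M=\left(\begin{smallmatrix}n_1P_1 & n_1P_2\\ n_2P_1 & n_2P_2\end{smallmatrix}\right)$ and $P_i\coloneqq\P(\uIII)$. I would conjugate by $R\otimes I$ with the rotation $R=\left(\begin{smallmatrix}n_1 & -n_2\\ n_2 & n_1\end{smallmatrix}\right)$; a direct computation using $n_1^2+n_2^2=1$ turns the Jacobian into the block upper-triangular matrix $\left(\begin{smallmatrix} G(n_1P_1+n_2P_2) & N\\ \zeros & \zeros\end{smallmatrix}\right)$, whose nontrivial diagonal block is $\widehat{\lambda}=\A(n_1P_1+n_2P_2)$ in the conservative case and $G(n_1P_1+n_2P_2)$ in the capacity case, and with $N=G(-n_2P_1+n_1P_2)$. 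Since $n_1P_1+n_2P_2$ and $\P(\v)$ are invertible by hypothesis, that diagonal block is invertible, so the off-diagonal block $N$ is annihilated by the shear similarity $\left(\begin{smallmatrix}I & X\\ 0 & I\end{smallmatrix}\right)$ with $X=-\big(G(n_1P_1+n_2P_2)\big)^{-1}N$; a final block permutation moves the nontrivial block to the lower-right corner, giving similarity to $\widehat{\Lambda}$ for (i).

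\emph{Step 4 (symmetrization for the capacity form).} For (ii) I would additionally conjugate the nontrivial block $\P\big(\widehat{\,\lVert \u \rVert\,}\big)^{-1}(n_1P_1+n_2P_2)$ by $\P\big(\widehat{\,\lVert \u \rVert\,}\big)^{\nicefrac{1}{2}}$, which is a well-defined symmetric positive-definite matrix by \eqref{AI}. This produces $\widetilde{\lambda}=\P\big(\widehat{\,\lVert \u \rVert\,}\big)^{-\nicefrac{1}{2}}(n_1P_1+n_2P_2)\P\big(\widehat{\,\lVert \u \rVert\,}\big)^{-\nicefrac{1}{2}}$, which is symmetric because $P_1,P_2$ are symmetric, so $\JC$ is similar to $\widetilde{\Lambda}$; its blocks being symmetric delivers real eigenvalues and a complete eigenbasis, i.e.\ strong hyperbolicity. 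The main obstacle is Step~1: correctly differentiating the implicitly defined, only piecewise-smooth norm and tying the formula to the region \eqref{AI}; once that identity is secured, Steps~2--4 are linear algebra driven by the unit-vector rotation and the invertibility hypotheses.
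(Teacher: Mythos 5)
Your proof is correct. Steps 1 and 2 coincide with the paper's argument: there, too, the key identity
$\D_{\ui}\,\widehat{ \, \lVert \u \rVert \, }=\P\big(\widehat{ \, \lVert \u \rVert \, }\big)^{-1}\P(\ui)$
is obtained by differentiating $\mathcal{R}^{-1}\circ\mathcal{T}$ with $\mathcal{T}(\widehat{\u})=\sum_i\uIII^{\ast 2}$ and $\mathcal{R}'=2\P$, which is exactly your inverse-function-theorem computation, and the Jacobians are assembled by the same chain rule. Where you genuinely diverge is the similarity step: the paper writes down an explicit pair of matrices $\widehat{\textbf{V}}(\widehat{\u},\v)$ and $\widehat{\textbf{V}}(\widehat{\u},\v)^{-1}$, built from $\P(\uI)$, $\P(\uII)$ and $\A(\widehat{\u},\v)$, and verifies $\J(\widehat{\u},\v)=\widehat{\textbf{V}}\,\widehat{\Lambda}\,\widehat{\textbf{V}}^{-1}$ by direct multiplication, then handles the capacity form as the special case $\v=\UNIT$ followed by the same $\P\big(\widehat{ \, \lVert \u \rVert \, }\big)^{\pm\nicefrac{1}{2}}$ conjugation you use in Step 4. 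Your rotation--shear--permutation chain reaches the same block-diagonal form constructively: it makes visible why the kernel block $\zeros$ appears (the rotation $R\otimes\indikator$ isolates the direction $\vec{n}$) and exactly where the invertibility of $n_1\P(\uI)+n_2\P(\uII)$ and of $\P(\v)$ is used (to annihilate the off-diagonal block by the shear), at the price of not exhibiting the eigenvector matrix in closed form. Both routes are complete; the only point you should make fully explicit is the total symmetry of $\langle\phi_{\boldsymbol{i}},\phi_{\boldsymbol{j}}\phi_{\boldsymbol{k}}\rangle_{\mathbb{P}}$ in all three indices, which is what simultaneously justifies $\P(p)q=\P(q)p$, hence $\mathcal{R}'=2\P$, and the symmetry of $\P(\uI)$, $\P(\uII)$ needed in Step 4 --- you assert it and it is easy to check, so this is a presentational remark rather than a gap.
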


\begin{proof} 
We introduce the injective mapping
$$
\mathcal{T}
 \ \ : \ \
\mathbb{R}^{2(K+1)} 
\rightarrow
\mathbb{R}^{K+1} , \ \
\widehat{\u} \mapsto
\widehat{\u}_1^{\ast 2}
+
\widehat{\u}_2^{\ast 2}
$$
which has the Jacobian 
$
\D_{\ui} \mathcal{T}( \widehat{\u}) 
= 2\P(\ui)
$ and we define 
$
\mathcal{R}'( \widehat{\u}) 
\coloneqq
\D_{\widehat{\u}} \mathcal{R}( \widehat{\u}) 
= 2\P\big(\widehat{\u}\big)
$. 
Then, the Jacobian to the gPC modes~\eqref{ARGMIN} reads as
$$
\D_{\ui}
\widehat{ \,
	\lVert \u \rVert
	\, }	
=
\D_{\ui}
\Big[
\mathcal{R}^{-1} \big( \mathcal{T}(\widehat{\u})   \big) 
\Big]
=
\bigg[
\mathcal{R}' \left(
\widehat{ \,
	\lVert \u \rVert
	\, }
\right)
\bigg]^{-1}
\D_{\ui} \mathcal{T}( \widehat{\u}) 
= 
\P\Big( \widehat{ \,
	\lVert \u \rVert
	\, } \Big)^{-1}
\P\big(\ui\big),
$$
which yields the Jacobians~$\J (\widehat{\u},\v)$ and $\JC (\widehat{\u})$. 
Define the matrices
\begin{alignat*}{8}
&\widehat{\textbf{V}}(\widehat{\u},\v)
&&\coloneqq
\Big[\diag\{1,1\}\otimes  \widehat{\lambda}(\widehat{\u},\v)&&\Big]^{-1}
&&\begin{pmatrix}
-\A(\widehat{\u},\v)
\P(\widehat{\,\u_2}) \P(\widehat{\,\u_1})^{-1} 
\A(\widehat{\u},\v)^{-1}
&
 \frac{n_1}{n_2} \indikator \\
\indikator & \indikator
\end{pmatrix},\\
&\widehat{\textbf{V}}(\widehat{\u},\v)^{-1}
&&\coloneqq
\Big[\diag\{1,1\}\otimes \A(\widehat{\u},\v) &&\Big]
&&\begin{pmatrix}
-n_2 \P(\uI) & n_1 \P(\uI) \\
n_2 \P(\uI) & n_2 \P(\uII)
\end{pmatrix}, \quad
\indikator\coloneqq \diag\{1,\ldots,1\}. 
\end{alignat*}

\noindent
Standard computations show
\[
\widehat{\textbf{V}}(\widehat{\u},\v)  \widehat{\textbf{V}}(\widehat{\u},\v)^{-1}=\indikator
\quad \text{and}\quad
\J (\widehat{\u},\v) 
=
\widehat{\textbf{V}}(\widehat{\u},\v)
\widehat{\Lambda}(\widehat{\u},\v)
\widehat{\textbf{V}}(\widehat{\u},\v)^{-1},
\]
which proves the first statement. 
The second statement follows analogously with the~choice~$\v=\UNIT$, i.e.~$\P(\v)=\indikator$, 
and due to the matrix similarities
\[
\JC (\widehat{\u}) 
=
\widehat{\textbf{V}}(\widehat{\u},\UNIT)
\left(
\diag\{1,1\}\otimes
\Bigg[
\P\Big( \widehat{ \,
	\lVert \u \rVert
	\, } \Big)^{-\nicefrac{1}{2}}
\ \, \widetilde{\lambda}(\widehat{\u}) \
\P\Big( \widehat{ \,
	\lVert \u \rVert
	\, } \Big)^{\nicefrac{1}{2}}
\Bigg]
\right)
\widehat{\textbf{V}}(\widehat{\u},\UNIT)^{-1}.
\]

\hfill
\end{proof}

\subsection{Relationship between the conservative and capacity form}\label{SectionCapacity}
We remark that the presented hyperbolicity results hold only local in time around states, where the assumptions of Theorem~\ref{TheoremHyp} are fulfilled, 
i.e.~the matrix~$	\P\big( \widehat{ \,
	\lVert \u \rVert
	\, } \big)$ is strictly positive definite. 
According to~\cite[Th.~2]{Sonday2011} and \cite[Th.~2.1]{H3}, this holds provided that  realizations 
\begin{equation}\label{Condition1}
\GK \left[\widehat{ \,
	\lVert \u \rVert
	\, }\right]\big(\xi(\omega)\big)  
= 
\sum\limits_{k=0}^K
\left(\widehat{ \,
	\lVert \u \rVert
	\, }\right)_{k}
\phi_{K}\big(\xi(\omega)\big) 
> 0
\end{equation}
are almost surely strictly positive. 
Hence, there is a degeneracy of the Jacobian for a vanishing solution~$\widehat{\u}\rightarrow (0,\ldots,0)^\T$. 
This is an expected property that is also observed in the deterministic case~\eqref{DeterministicJacobian}. 
Furthermore, Theorem~\ref{TheoremHyp} only guarantees that the Jacobian~$\JC (\widehat{\u})$ in the capacity form has real eigenvalues and a complete set of eigenvectors, since it is similar to a symmetric matrix. Hyperbolicity of the conservative form is not  guaranteed, because the matrix~$
\A(\widehat{\u},\v)
$ is not necessarily symmetric and the square root may not exist. 
If the velocity is deterministic and positive, i.e.~$v(\x)>0$, we have~$\v(\x)=v(\x)\UNIT$. Then, the matrix
	$$
	\A(\widehat{\u},\v)
	= v\,
	\P\Big( \widehat{ \,
		\lVert \u \rVert
		\, } \Big)^{-1}
$$
is 	symmetric and positive definite. 
In general however, the Jacobian~$\J (\widehat{\u},\v)$ may have complex eigenvalues. An example is given in the appendix. 
Then, the capacity form has to be considered. 

Note that this form naturally occurs e.g.~in the derivation of conservation laws if the flux of a quantity is defined in terms of a quantity that is \emph{not} conserved~\cite[Sec.~2.4, Sec.~6.16]{Leveque}. We consider an arbitrary spatial domain~$C$, where the matrix~$\P\big(\v(\x)\big)$ is invertible. 
Then, the integral forms to the two-dimensional level set equations read as follows: \\

\noindent
\textbf{Conservative form} \vspace{2mm}

\noindent
$\displaystyle
\begin{aligned}
\frac{\textup{d}}{\textup{d} t}
\int_{C} 
\widehat{\u} (t,\x)
\d x 
=
&-\int_{\partial C}
n_1(s) 
\widehat{f_1}\Big(\widehat{\u}\big(t,\x(s)\big), \v\big(\x(s)\big) \Big)
+
n_2(s)
\widehat{f_2}\Big(\widehat{\u}\big(t,\x(s)\big), \v\big(\x(s)\big)\Big)
\d s 
\end{aligned}
$

\bigskip\smallskip

\noindent
\textbf{Capacity form} \vspace{2mm}

\noindent
$\displaystyle
\begin{aligned}
\frac{\textup{d}}{\textup{d} t}
\int_{C} 
\P\big(\v(\x)\big)^{-1}
\widehat{\u} (t,\x)
\d \x 
=
&-\int_{\partial C}
n_1(s) 
\widetilde{f_1}\Big(\widehat{\u}\big(t,\x(s)\big)\Big)
+
n_2(s)
\widetilde{f_2}\Big(\widehat{\u}\big(t,\x(s)\big)\Big)
\d s \\
&-
\int_{C} 
\P\big(\v(\x)\big)^{-1} \;
\big(\partial_{x_1}+\partial_{x_2} \big)\v(\x) \;
\ast
\widehat{ \
	\lVert \u(t,\x) \rVert
	\ }
\d \x. 
\end{aligned}
$

\bigskip
\noindent


\section{Hyperbolicity preserving finite-volume discretization}\label{SectionFV}
To improve and faciliate the readability, we present the numerical discretization in one spatial dimension and we assume a constant velocity~$\v(x)=\v$.  Then, the conservative and capacity form read as
\begin{alignat*}{8}
&
&&\partial_t \widehat{\u}(t,x)
+
\partial_{x}
\widehat{f}\Big(\widehat{\u}(t,\x),\v\Big)
&&=0
\ \ \text{for} \ \
\widehat{f}\big(\widehat{\u},\v\big)
&&=
\v \ast
\widehat{ \,
	| \u |
	\, }, \ \,
&&\JoneDim \big(\widehat{\u},\v\big)
&&=
\P\big(\v\big) 
\JtildeoneDim \big(\widehat{\u} \big) \\
&\P(\v)^{-1}
&&\partial_t \widehat{\u}(t,x)
+
\partial_{x}
\widetilde{f}\Big(\widehat{\u}(t,\x)\Big)
&&=0
\ \ \text{for} \ \
\widetilde{f}\big(\widehat{\u}\big)
&&=
\widehat{ \,
	| \u |
	\, },
\ \,
&&\JtildeoneDim \big(\widehat{\u} \big)
&&=
 \P\big( \widehat{ \,
	| \u |
	\, } \big)^{-1}
\P\big( \widehat{\u} \big),
\end{alignat*}
where~$
\JoneDim \big(\widehat{\u},\v\big)
\coloneqq
\D_{\widehat{\u}}
\widehat{f}\big(\widehat{\u},\v\big)
$ 
and~$
\JtildeoneDim \big(\widehat{\u}\big)
\coloneqq
\D_{\widehat{\u}}
\widetilde{f}\big(\widehat{\u}\big)
$ 
denote the corresponding Jacobian. 
The interested reader finds  the two-dimensional case with  space-varying velocity in the appendix. 
The spatial domain is discretized in equidistant grid cells ${C_{j}
\coloneqq
\big(x_{j-\nicefrac{1}{2}}, x_{j+\nicefrac{1}{2}} \big)
\subset \mathbb{R} }$ with volume~$
\DxI=x_{j+\nicefrac{1}{2}}- x_{j-\nicefrac{1}{2}} >0
$  
centered at~$x_{j}$. The cell averages at time~$t^n\in\mathbb{R}^+_0$ are denoted by~$\ubar_{j}^n$ and~$\wbar_{j}^n$.

\subsection{Conservative form}\label{SecDiscretizationConservative}
Under the assumption of a real  spectrum,  which has been deduced in Theorem~\ref{TheoremHyp}, 
the \textbf{local Lax-Friedrichs flux} reads  as
\begin{equation}\label{NumFluxConserv}
	\NumFlux \big(\ubarL,\ubarR,\v\big) 
	\coloneqq 
	\frac{1}{2} \Big( \widehat{f}\big(\ubarL,\v\big)+\widehat{f}\big(\ubarR,\v\big) \Big) 
	- 
	\frac{1}{2}
	\max\limits_{q=\ell,r} \Big\{ 
	\sigma\Big\{ 
	\JoneDim
	\big( \ubarK,\v\big) \Big\}
	\Big\}
	\big( \ubarR - \ubarL \big),
\end{equation}
where~${
	\sigma\{ \cdot \}
}$ denotes the spectral radius. 
Then, a first-order finite-volume discretization reads as
\begin{equation}\label{conservativeUpdate}
\ubar_{j}^{n+1}
=
\ubar_{j}^{n}
- \frac{\Delta t}{\Delta x}
\bigg[
\NumFlux\left(\ubar_{j}^{n},\ubar_{j+1}^{n},\v \right)
		-
\NumFlux\left(\ubar_{j-1}^{n},\ubar_{j}^{n},\v\right)
\bigg].
\end{equation}

\noindent
Provided that the CFL-condition
\begin{equation}\label{CFLconservative}
	\max\limits_{j\in\mathbb{N}} \Big\{ 
\sigma\left\{ 
\JoneDim
\big( \ubar_j^n,\v\big) \right\} \Big\}
	\frac{\Delta t}{\Delta x}
	<1
\end{equation}
holds, the  discrete solution converges to the weak entropy solution~\cite{Leveque}.

\subsection{Capacity form}\label{SectionNumericsCapacity}
We have already remarked that the conservative form is  hyperbolic if the velocity is deterministic. 
Then, the CFL-condition ensures that the dependency of the true
solution is within the numerical range~\cite[Sec.~4.4]{Leveque}, which results in a convergent numerical scheme. 
Applying the stochastic Galerkin method to random velocities, however, results in additional waves that influence the numerical dependence on the solution. 
The idea of the hyperbolicity preserving discretization is to introduce a \emph{non-uniform effective grid} to capture the numerical dependence in an appropriate way. 
Using the orthogonal eigenvalue decomposition~$
\P(\v) 
=
\VP(\v)  \DP(\v)  \VP(\v) ^\T, 
$ 
the one-dimensional capacity form is equivalent to
\begin{equation}\label{ToyProblem}
	\begin{aligned}
&\DP(\v)^{-1} \partial_t \w(t,x)
+
\partial_{x}
\Feffective \big(\w(t,x),\v \big) 
=0
\quad\text{with}\\
&\w \coloneqq \VP(\v)^\T \widehat{\u} 
\quad\text{and}\quad
\Feffective \big(\w,\v \big)
\coloneqq
\VP(\v)^\T \,
\widetilde{f}\big( \VP(\v) \w \big).
\end{aligned}
\end{equation}
The $k$-th component of the vector valued cell average~$	\wbar_{j}^{n}\in\mathbb{R}^{K+1}$   is given by the recursion
\begin{equation}\label{NonEquidistant}
	\begin{aligned}
		&\left(\wbar_{j}^{n+1} \right)_k
		=
		\left(\wbar_{j}^{n} \right)_k
		-
		\frac{
			{\Delta t}
		}{\Delta \tilde{v}_{k}}
		\bigg[
		\NumFluxC_{k}^{(\textup{gLF})} \left(\wbar_{j}^{n},\wbar_{j+1}^{n},\v \right)
		-
		\NumFluxC_{k}^{(\textup{gLF})} \left(\wbar_{j-1}^{n},\wbar_{j}^{n},\v \right)
		\bigg] \\
		&\text{with the effective volume}
		\quad  \Delta \tilde{v}_{k} \coloneqq 
		\frac{\Delta x}{\DP_k (\v)}.
	\end{aligned}
\end{equation}

\noindent
Here, the componentwise \textbf{global Lax-Friedrichs flux}  takes the effective volume into account  and  reads  as
\begin{equation}\label{globalLF}
	\NumFluxC_{k}^{(\textup{gLF})} \big(\wbarL,\wbarR,\v\big) 
	\coloneqq 
	\frac{1}{2} \Big( \Feffective_{k}\big(\wbarL,\v\big)+\Feffective_{k}\big(\wbarR,\v\big) \Big) 
	- 
	\frac{1}{2}
\frac{\Delta \tilde{v}_{k}}{\Delta t}
	\big( \wbarR - \wbarL \big).
\end{equation}
We observe from the diffusion part of the componentwise numerical flux function~\eqref{globalLF} that the resulting CFL-condition is 
\begin{equation*} 
\vMAX \
	\max\limits_{j\in\mathbb{N}} \Big\{ 
	\sigma\left\{ 
	\JtildeoneDim
	\big( \ubar_j^n \big) \right\}
	\Big\}
	\frac{\Delta t}{\Delta x}
	<1
\quad \text{for} \quad
\vMAX
	\coloneqq
	\max\limits_{ k =0,\ldots,K } \Big\{ \big|
\DP_k (\v)
\big| \Big\}
\quad \text{and} \quad
\ubar_j^n
=
\VP(\v)
\wbar_j^n.
\end{equation*}
Following \cite[Sec.~6.17]{Leveque}, we view the scheme~\eqref{NonEquidistant} as a \emph{non-uniform effective space discretizations}~$\Delta \tilde{v}_{k}\in\mathbb{R} $ that is related by a coordinate transform to a \emph{uniform computational domain}~$\Delta x \in \mathbb{R}^+$. 
At this point, we emphasize that we do not make any assumption on the direction of the random velocities and hence on the sign of the effective volume. This is a desired result, since the solution to the level set equations behaves symmetric in terms of the sign of the velocity. 
We introduce  for each component~$\w_k$ the change of variables~$
\varphi_k(x) =\tilde{v}
$ 
satisfying~$
\varphi_k'(x)\DP_k \big(\v(x)\big)=1
$ 
and~$
\w_k(t,x)
=
\wtilde_k \big(t,\varphi_k(x)\big)
$. 
This yields
$$
\int_{x_{j-\nicefrac{1}{2}}}^{x_{j+\nicefrac{1}{2}}}
\w_k\big(t,x\big)
\DP_k \big(\v(x)\big)^{-1}
\d x
= 
\int_{x_{j-\nicefrac{1}{2}}}^{x_{j+\nicefrac{1}{2}}}
\wtilde_k \big(t,\varphi_k(x)\big)
\varphi_k'(x)
\d x
=
\int_{\tilde{v}_{j-\nicefrac{1}{2},k}}^{\tilde{v}_{j+\nicefrac{1}{2},k}}
\wtilde_k \big(t,\tilde{v}\big)
\d \tilde{v}
$$
for~$\tilde{v}_{j\pm\nicefrac{1}{2},k} \coloneqq \varphi_k (x_{j\pm\nicefrac{1}{2},k} ) $. Therefore, the capacity form 
$$
\frac{\textup{d}}{\d t}
\int_{x_{j-\nicefrac{1}{2}}}^{x_{j+\nicefrac{1}{2}}}
\w_k\big(t,x\big)
\DP_k \big(\v(x)\big)^{-1}
\d x
=
\Feffective_k\Big(
\w\big(t,x_{j-\nicefrac{1}{2}}\big),\v
\Big)
-
\Feffective_k\Big(
\w\big(t,x_{j+\nicefrac{1}{2}}\big),\v
\Big)
$$
on the computational domain can be rewritten into the conservative form 
\begin{equation}\label{ConservativeForm}
	\frac{\textup{d}}{\d t}
	\int_{\tilde{v}_{j-\nicefrac{1}{2},k}}^{\tilde{v}_{j+\nicefrac{1}{2},k}}
	\wtilde_k \big(t,\tilde{v}\big)
	\d \tilde{v}
	=
	\Feffective_k\Big(
	\wtilde\big(t,\tilde{v}_{j-\nicefrac{1}{2},k}\big),\v
	\Big)
	-
	\Feffective_k\Big(
	\wtilde\big(t,\tilde{v}_{j+\nicefrac{1}{2},k}\big),\v
	\Big).
\end{equation}

\noindent
Figure~\ref{FigureCapacity} illustrates the previous analysis. 
The left panel shows the uniform computational domain in black. The propagation of the continuous solution is exemplified by a blue and red, dashed arrow. 
The case when the true solution (red arrow) is not within the numerical range may happen for the conservative form if the Jacobian~$\JoneDim \big(\widehat{\u},\v\big)$ has complex eigenvalues. 
In contrast, the capacity-form differencing~scheme leads to a stable discretization as illustrated by the blue arrow. 
The right panel shows the case when the $k$-th component has a fast speed~$\DP_k \big(\v(x)\big)$, which results in a faster propagation of the true solution~(blue arrow). 
This requires a larger effective volume
$ \Delta \tilde{v}_{j,k} 
=
\tilde{v}_{j+\nicefrac{1}{2},k} - \tilde{v}_{j-\nicefrac{1}{2},k}$. 
Furthermore, the effective volume satisfies the conservative form~\eqref{ConservativeForm}.

\begin{figure}[h]

\begin{tikzpicture}[scale=1.5,cap=round]
	
	
	\begin{scope}
		
		\tikzstyle{important line}=[very thick]
		
		
		\draw[style=important line] (-0.2,0) -- (3.2,0);
		\draw[style=important line] (-0.2,1.2) -- (3.2,1.2);
		
		\draw[style=important line,] (0,-0.2) -- (0,1.4);
		\draw[style=important line,] (1,-0.2) -- (1,1.4);
		\draw[style=important line,] (2,-0.2) -- (2,1.4);
		\draw[style=important line,] (3,-0.2) -- (3,1.4);
		
		\draw (1,-0.4) -- (1,-0.4) node {$x_{j-\nicefrac{1}{2}}$};
		\draw (2,-0.4) -- (2,-0.4) node {$x_{j+\nicefrac{1}{2}}$};
		
		
		\draw[thick,myblue,decoration={markings,mark=at position 1 with	{\arrow[scale=2,>=stealth]{>}}},postaction={decorate}] (1,0) -- (1.9,1.16);
		\draw[thick,dashed,myred,decoration={markings,mark=at position 1 with	{\arrow[scale=2,>=stealth]{>}}},postaction={decorate}] (1,0) -- (2.2,1.16);

		\draw (-0.45,0.6) -- (-0.45,0.6) node {time};
		\draw (-0.45,1.2) -- (-0.45,1.2) node {$t^{n+1}$};
		\draw (-0.45,0) -- (-0.45,0) node {$t^{n}$};
		
		\draw (1.5,1.5) -- (1.5,1.5) node {\textbf{computational domain}};

		
	\end{scope}
	
	
	\begin{scope}[xshift=4.8cm]
		
		\fill [white!90!gruen] (0.5,0) rectangle (2.5,1.2);
		
		\tikzstyle{important line}=[very thick]
		
		
		\draw[style=important line] (-0.2,0) -- (3.2,0);
		\draw[style=important line] (-0.2,1.2) -- (3.2,1.2);
		
		\draw[style=important line,] (0,-0.2) -- (0,1.4);
		\draw[style=important line,] (0.5,-0.2) -- (0.5,1.4);
		\draw[style=important line,] (2.5,-0.2) -- (2.5,1.4);
		\draw[style=important line,] (3,-0.2) -- (3,1.4);
		
		\draw (0.5,-0.4) -- (0.5,-0.4) node {$\tilde{v}_{j-\nicefrac{1}{2},k}$};
		\draw (2.5,-0.4) -- (2.5,-0.4) node {$\tilde{v}_{j+\nicefrac{1}{2},k}$};
		
		

		
		\draw[thick,myblue,decoration={markings,mark=at position 1 with	{\arrow[scale=2,>=stealth]{>}}},postaction={decorate}] (0.5,0) -- (2.4,1.16);

		\draw (1.5,1.5) -- (1.5,1.5) node {\textbf{effective volume}};
		
		\draw (-0.45,0.6) -- (-0.45,0.6) node {time};
		\draw (-0.45,1.2) -- (-0.45,1.2) node {$t^{n+1}$};
		\draw (-0.45,0) -- (-0.45,0) node {$t^{n}$};

		\draw (1.5,0.6) -- (1.5,0.6) node {$\displaystyle \color{gruen}
			\int\limits_{\tilde{v}_{j-\nicefrac{1}{2},k}}^{\tilde{v}_{j+\nicefrac{1}{2},k}}
			\wtilde_k \big(t,\tilde{v}\big)
			\d \tilde{v}
			$};

	\end{scope}
	
\end{tikzpicture}

	\caption{Uniform computational domain~(left): Blue arrow illustrates the capacity-form differencing scheme, red arrow a wrong estimation of characteristic speeds. Non-uniform effective volume~(right): Green area shows the conserved quantity~\eqref{ConservativeForm}, the blue arrow exemplifies a solution that is propagated with a faster (random) speed, which needs a larger effective volume~$\Delta \tilde{v}_{j,k}$.}
	\label{FigureCapacity}
\end{figure}
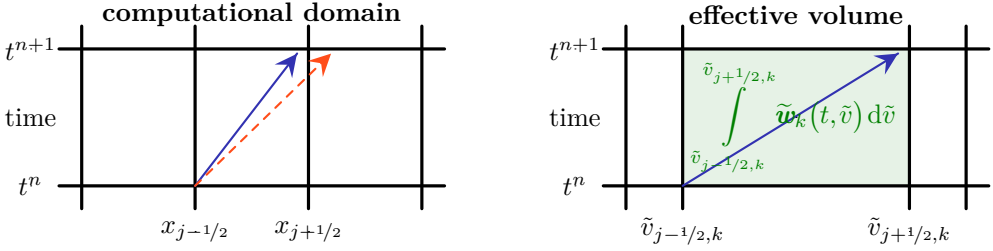

Finally, we remark that the 
hyperbolicity preserving scheme~\eqref{NonEquidistant}  can be implemented in a similar way as the conservative update~\eqref{conservativeUpdate}. Furthermore, the assumption~$\DP_k \big(\v(x)\big) \neq 0$ is~\emph{not} required if the update is defined as
\begin{align*}
\wbar_{j}^{n+1}
&\; = \,
\wbar_{j}^{n} 
-
\frac{
	{\Delta t}
}{\Delta x}
\bigg[
\NumFluxC \left(\wbar_{j}^{n},\wbar_{j+1}^{n},\v \right)
-
\NumFluxC \left(\wbar_{j-1}^{n},\wbar_{j}^{n},\v \right)
\bigg], \\
	\NumFluxC \big(\wbarL,\wbarR,\v\big) 
&	\coloneqq 
	\frac{1}{2} \Big( \widetilde{F}\big(\wbarL,\v\big)+\widetilde{F}\big(\wbarR,\v\big) \Big) 
	- 
	\frac{1}{2}
	\max\limits_{q=\ell,r} \Big\{ 
	\sigma\Big\{ 
	\JtildeoneDim
	\big( \wbarK ) \Big\}
	\Big\}
	\big( \wbarR - \wbarL \big), \\
\widetilde{F}\big(\wbar,\v \big)
&\coloneqq
\DP \big(\v(x)\big) 
\VP(\v)^\T \,
\widetilde{f}\big( \VP(\v) \wbar \big).
\end{align*}

\subsection{Computing the quantile of the perturbed level set}\label{SectionComputeQuantile}
For  convenience of the reader we follow the elementary argument in~\cite[Sec.~2]{Jin1998} that shows how the desired solution~$\widehat{\varphi}$ is obtained by the hyperbolic form~\eqref{HC} with solution~$\widehat{\u}=\nabla_{\x}\hat{\varphi}$, considered in Theorem~\ref{TheoremHyp} and Section~\ref{SectionFV} in terms of a zero viscosity limit. 
To this end, we assume 
~$\widehat{\u}\in L^\infty\big( (0,T)\times\mathbb{R}^d;\mathbb{R}^{d(K+1) }\big) $ 
is a weak viscosity solution to the hyperbolic form~\eqref{HC}. 
Then, there exists a smooth solution~$
\widehat{\u}^{\varepsilon}
\in
 C^2\big( (0,T)\times\mathbb{R}^d;\mathbb{R}^{d(K+1)}\big) 
$ to the viscous Cauchy problem
\begin{equation}\label{visciousForm}
\partial_t \widehat{\u}^{\varepsilon}(t,\x)
+
\nabla_{\x}
\H\Big(  \widehat{\u}^{\varepsilon}(t,\x),\x \Big)
=
\varepsilon \Delta_{\x}  \widehat{\u}^{\varepsilon}(t,\x)
\quad\text{for}\quad
\widehat{\u}^{\varepsilon}(0,\x)
=
\mathcal{I}_{\widehat{\u}}(\x),
\end{equation}
where we assume for simplicity smooth initial data. Furthermore, there exists a constant~$c>0$ such that for any~$m<\infty$ we have
\begin{equation}\label{HEAT0}
\big\lVert \widehat{\u}^{\varepsilon} \big\rVert_{L^\infty} <c
\quad\text{for all}\quad
\varepsilon>0
\quad\text{and}\quad
\widehat{\u}^{\varepsilon}
\overset{L^\infty}{\rightharpoonup}
\widehat{\u}\in L^m
\quad\text{in the limit}\quad
\varepsilon\rightarrow0.
\end{equation}

\noindent
We consider the heat equations
\begin{alignat}{8}
&\partial_t \widehat{\varphi}^{\varepsilon}(t,\x)
&&-
\varepsilon \Delta_{\x}  \widehat{\varphi}^{\varepsilon}(t,\x)
&&=-
&&\H\Big(  \widehat{\u}^{\varepsilon}(t,\x),\x \Big)
&&\quad\text{for}\quad
\widehat{\varphi}^{\varepsilon}(0,\x)
&&=
&&\mathcal{I}_{\widehat{\varphi}}(\x), \label{HEAT1}\\
&\partial_t \widehat{\varphi}_{\x}^{\varepsilon}(t,\x)
&&-
\varepsilon \Delta_{\x}  \widehat{\varphi}_{\x}^{\varepsilon}(t,\x)
&&=-\nabla_{\x}
&&\H\Big(  \widehat{\u}^{\varepsilon}(t,\x),\x \Big)
&&\quad\text{for}\quad
\widehat{\varphi}_{\x}^{\varepsilon}(0,\x)
&&=\nabla_{\x}
&&\mathcal{I}_{\widehat{\varphi}}(\x), \label{HEAT2}
\end{alignat}
where the states~$
\widehat{\varphi}_{\x}
\coloneqq 
\nabla_{\x}
\widehat{\varphi}
$ 
are obtained by differentiating equation~\eqref{HEAT1}. 
Since equation~\eqref{HEAT1} is a system of decoupled heat equations, a maximum principle can be applied to each component. 
Hence, there exist constants~$C_{{k}}(T)>0$ such that the bound $
\big|\widehat{\varphi}_{{k}} \big|
 \leq 
C_k(T)
$ holds
for all $t\in (0,T]$, which are independent for~$\varepsilon>0$ due to  property~\eqref{HEAT0}. Subtracting the viscous form~\eqref{visciousForm} from the heat equation~\eqref{HEAT2} yields 
$$
\partial_t 
\Big( \widehat{\varphi}_{\x}^{\varepsilon}-\widehat{\u}^{\varepsilon} \Big)(t,\x)
-\varepsilon
\Delta_{\x}
\Big( \widehat{\varphi}_{\x}^{\varepsilon}-\widehat{\u}^{\varepsilon} \Big)(t,\x)
=0
\quad\text{for}\quad
\Big( \widehat{\varphi}_{\x}^{\varepsilon}-\widehat{\u}^{\varepsilon} \Big)(0,\x)
=0.
$$
The uniqueness of this Cauchy problem yields the relation~$
\widehat{\varphi}_{\x}^{\varepsilon}(t,\x)=\widehat{\u}^{\varepsilon} (t,\x)
$ and~equation~\eqref{HEAT1} reads as
$$
\partial_t \widehat{\varphi}^{\varepsilon}(t,\x)
+
\H\Big( \nabla_{\x} \widehat{\varphi}^{\varepsilon}(t,\x),\x \Big)
=
\varepsilon \Delta_{\x}  \widehat{\varphi}^{\varepsilon}(t,\x).
$$
Since~$\widehat{\varphi}^{\varepsilon}$ is uniformly bounded in $W^{1,\infty}$, there exists a unique viscosity solution~$
\widehat{\varphi}
\in W^{1,\infty}$ that satisfies~$\nabla_{\x}\widehat{\varphi}=\widehat{\u}$ almost everywhere. \\

\begin{remark}
We refer the interested reader to~\cite{Jin1998,Kruskov1975,Crandall1983} for more details on the relation in terms of viscosity solutions. 
The converse statement in the deterministic case, i.e.~${\u}$ is the vanishing viscosity solution if~$\varphi$ is a viscosity solution, can be shown by exploiting the convexity of the deterministic~Hamiltonian.  	

The projected Hamiltonian~$\H$, however, is not a scalar function. Therefore, the converse statement is not straightforward and left here as an open problem. 
In this work the analysis is carried out in the variable~$\widehat{\u}$. Hence, it is only relevant that~$\widehat{\varphi}$ is a viscosity solution for given~$\widehat{\u}$.
	\end{remark}

\medskip
\noindent
Finally, we note that the matrices~$\mathcal{M}_k$ in the definition~\eqref{2ndMoment} can  be exactly precomputed by Gaussian quadrature. 
According to~\cite[Sec.~5.1]{GersterJCP2019}, 
the positive definiteness assumption~\eqref{AI}, 
which is required for the  optimization problem~\eqref{ARGMIN},  may be efficiently verified by evaluating condition~\eqref{Condition1} at  quadrature nodes~$\xi^{(1)},\ldots,\xi^{(Q)}$. Namely, the inequalities
\begin{equation}\label{QuadratureNodes}
	\GK \left[\widehat{ \,
		\lVert \u \rVert
		\, }\right]\left(\xi^{(q)}\right) >0 
	\quad \text{must hold  at all nodes} \quad
	\xi^{(1)},\ldots,\xi^{(Q)}. 
\end{equation}

\section{Computational experiments}
The theoretical results are illustrated numerically for a Riemann problem with deterministic initial values
$$
\u(0,x)
=
\begin{cases}
	-1 & \text{for} \ \ x<0, \\
	\hphantom{+} 1 & \text{for} \ \ x>0
\end{cases}
\quad  \text{and uniformly distributed  velocity}\quad
v(\xi) \sim \mathcal{U}\big( \nicefrac{1}{2},    \nicefrac{3}{2} \big).
$$
The exact solution reads as
\begin{equation}\label{reference}
	\u(t,x,\xi)
	=
	\begin{cases}
		-1 & \text{for} \ \ x<t v(\xi), \\
		\hphantom{+} 1 & \text{for} \ \ x> t v(\xi)
	\end{cases}
	\quad\text{and}\quad
	\u(t,x,\xi)=0 
	\quad\text{otherwise}.
\end{equation}
As remarked in~Section~\ref{SectionCapacity}, the presented stochastic Galerkin formulations are only obtained by a well-posed optimization problem provided that assumption~\eqref{AI} holds, which is computationally verified by the inequality~\eqref{Condition1}. 
In the other case, regularization techniques are needed. 
More precisely, we consider a treshold parameter~$0<\treshold\ll 1$ and define the set
$$
\mathbb{T}
\coloneqq
\bigg\{
x\in\mathbb{R} \ \Big| \
\GK \left[\widehat{ \,
	\lVert \u \rVert
	\, }\right](\xi)  
< \treshold
\bigg\},
$$
where the optimization problem~\eqref{ARGMIN} is not necessarily well-posed. 
In this set we use the gPC expansion with two modes, i.e.~the truncation~$K=1$. Then, the gPC modes of the norm and the Jacobian are explicitly obtained by the equality~\eqref{ExpressionsFigure} and read as
\begin{align*}
	&\widehat{ \,
		\lVert \u \rVert
		\, }
	=
	\frac{1}{2}
	\begin{pmatrix}
		\big| \widehat{\u}_0 + \widehat{\u}_1 \big|  
		+
		\big| \widehat{\u}_0 - \widehat{\u}_1 \big|  \\
		\big| \widehat{\u}_0 + \widehat{\u}_1 \big|  
		-
		\big| \widehat{\u}_0 - \widehat{\u}_1 \big|  
	\end{pmatrix}
	\quad
	\text{with}
	\quad
	\JtildeoneDim \big(\widehat{\u} \big)
	=
	\D_{\widehat{\u}}
	\widehat{ \,
		\lVert \u \rVert
		\, }
	=
	\begin{pmatrix}
		s_1(\widehat{\u}) & s_2(\widehat{\u}) \\
		s_2(\widehat{\u}) & s_1(\widehat{\u}) 
	\end{pmatrix}, \\
	&  \text{for}\ \ 
	s_1(\widehat{\u}) 
	\coloneqq
	\sign\big(
	\widehat{\u}_0+ \widehat{\u}_1
	\big)
	+
	\sign\big(
	\widehat{\u}_0- \widehat{\u}_1
	\big)
	\ \ \text{and} \ \
	s_2(\widehat{\u}) 
	\coloneqq
	\sign\big(
	\widehat{\u}_0+ \widehat{\u}_1
	\big)
	-
	\sign\big(
	\widehat{\u}_0- \widehat{\u}_1
	\big).
\end{align*}
Likewise to the deterministic case~\eqref{DeterministicJacobian}, the derivative
of the norm is understood as the generalized gradient~\cite{Clarke1990,Frankowska1989,LipschitzFlux2001}.   
It is important to note that the reduction of the gPC truncation to~$K=1$ decreases the spectrum that enters the local Lax-Friedrichs flux. Hence, this approach~\emph{does not ensure} enough numerical viscosity for a stable finite-volume discretization. To this end, the local Lax-Friedrichs flux is replaced by the global flux, however, only for states that belong to the domain~$\mathbb{T}$. In the other case, the numerical discretizations of Section~\ref{SectionFV} are used. 
Finally, we emphasize that the conservative form is only included for comparison, since hyperbolicity is not necessarily guaranteed as the counterexample in~Appendix~\ref{SecCounterExample} shows.

\begin{figure}[H]
	\begin{minipage}{0.49\textwidth}
		\begin{center}	
			{\textbf{capacity form}} 			
			\vspace{-1mm}
			\scalebox{0.85}{\includegraphics[width=\linewidth]{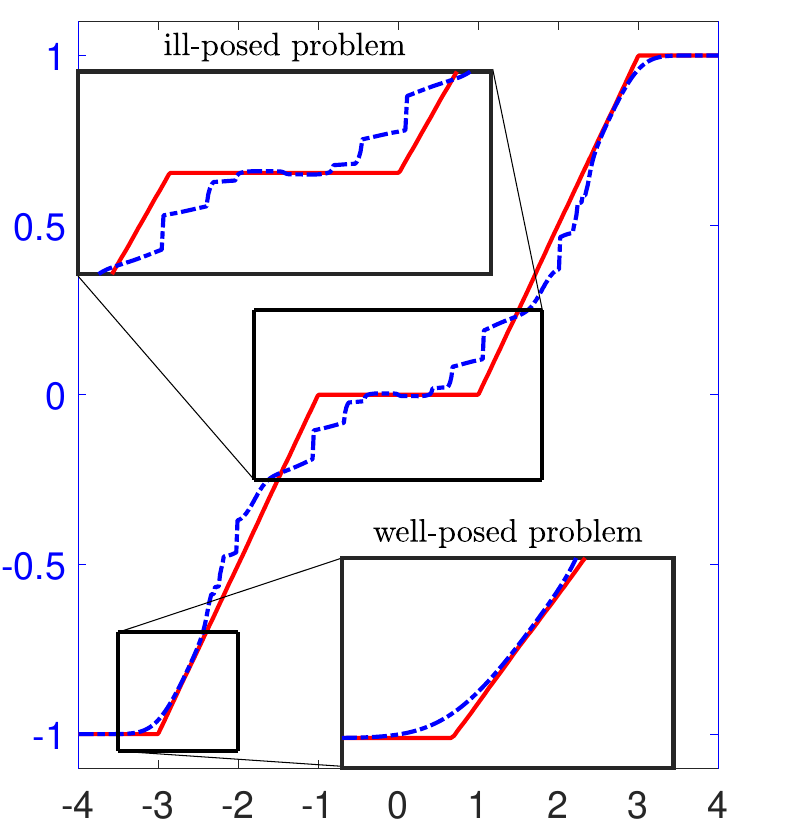}}			
			\vspace*{-1mm}
			spatial domain
		\end{center}	
	\end{minipage}
	\hfil
	\begin{minipage}{0.49\textwidth}
		\begin{center}	
			{\textbf{conservative form}} 			
			\vspace{-1mm}
			\scalebox{0.85}{\includegraphics[width=\linewidth]{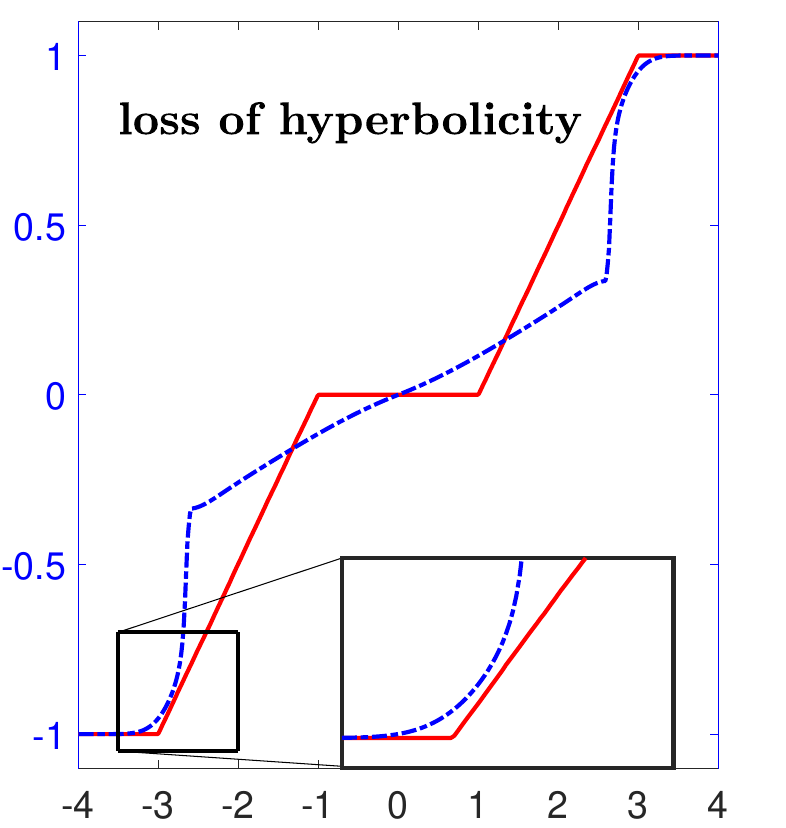}}			
			\vspace*{-1mm}
			spatial domain
		\end{center}	
	\end{minipage}

	\caption{Numerical results for the conservative form presented in Section~\ref{SecDiscretizationConservative} and capacity form in~Section~\ref{SectionNumericsCapacity}. The spatial discretization is~$\Delta x = 2^{-6}$, the final time is~$t=1$ and the CFL number is chosen as~$\textup{CFL}=0.95$. The solid red line states the mean of the reference solution and the numerical solutions to the stochastic Galerkin systems are blue dotted. The zoom in the upper left corner highlights the domain close to the random level-set where oscillations occur due to the ill-posed optimization problem~\eqref{ARGMIN}. The zoom in the lower right corner shows a region where the assumptions in Theorem~\ref{TheoremHyp} hold.}
	\label{FigureNumerics}
	
\end{figure}

Computational results are given in~Figure~\ref{FigureNumerics} for the capacity form~(left panel) and conservative form~(right panel). Therein, the reference solution~\eqref{reference} at time~$t=1$ is stated in terms of the mean, which is plotted as red solid line. 
The mean of the stochastic Galerkin formulation  with truncation~$K=6$, i.e.~the gPC mode~$\widehat{\u}_0(1,x)$, is the blue dotted line.  

The numerical experiments, which are shown in the left panel for the capacity form,  indicate that the capacity-form differencing scheme leads to minior computational errors apart from the random level-set. In regions where the optimization problem is not necessarily well-posed, however, significant errors occur. This shows the need for  regularization techniques, as e.g.~used in~\cite{Alldredge2012}, or piecewise ansatz functions that exploit the local structures in contrast to globally defined Legendre polynomials. 

Since the uncertainty arisis in this problem from the velocity, the conservative form can be non-hyperbolic. Hence, the numerical solution of a finite-volume method, which is shown in the right panel, differs significantly from the reference solution. This explains the need for considering a numerical discretization of the capacity form.

\section{Summary and outlook}
We have analyzed the mathematical description of random zero-level sets that describe moving interfaces. The underlying equations are in  Hamilton-Jacobi form, but are equivalent to a hyperbolic form in the sense of viscosity solutions, which have been analyzed in terms of hyperbolicity. 

Figure~\ref{FigureAdaptivity} summarizes the theoretical results. It  distinguishes between the red area (left), which corresponds to solutions satisfying~$
\GK \big[\widehat{ \,
	\lVert \u \rVert
	\, }\big]\big(\xi(\omega)\big) 
 >0$, and the random interface~(right, blue), where realizations 
may be non-positive. 
In the left, red region  the optimization problem~\eqref{ARGMIN} is well-posed and~Theorem~\ref{TheoremHyp}~ensures hyperbolicity
for all basis functions. 
For solutions that are closer to the zero-level set, the optimization problem~\eqref{ARGMIN} may have a solution that results in non-positive realizations~$
	\GK \big[\widehat{ \,
	\lVert \u \rVert
	\, }\big]\big(\xi(\omega)\big) \leq0$. Then, the matrix~\eqref{AI} may be indefinite~\cite[Th.~2.1]{H3}. 
These, negative realizations may result from projection errors, when the Galerkin method is used for higher polynomial ansatz functions. 
Since the presented intrusive approach allows to precompute all stochastic quantities exactly,  
an efficient criterium~\eqref{QuadratureNodes} that verifies the assumptions of Theorem~\ref{TheoremHyp} is to test positivity of the truncated polynomial chaos expansions at the quadrature nodes.

The case, when states are close to the boundary of the random interface, where the minimization problem~\eqref{ARGMIN} is ill-posed, involves additional theoretical and numerical challenges. 
It is an active field of research to introduce regularization techniques~\cite{Alldredge2012} and piecewise ansatz function, as e.g.~wavelet basis, that exploit the local structures.


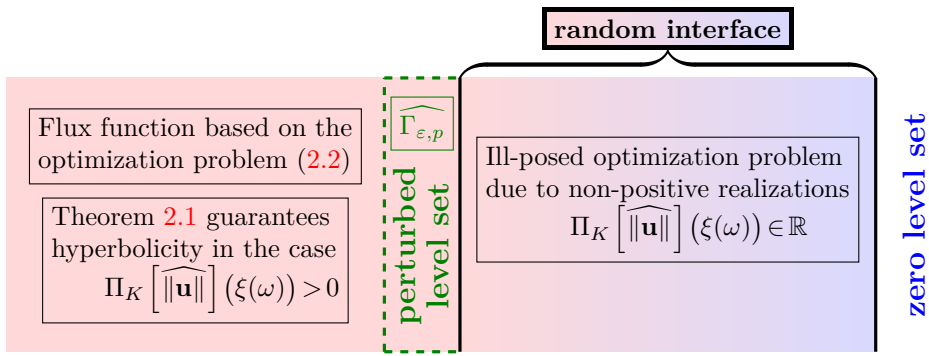
\begin{figure}[H]
	\begin{center}

\begin{tikzpicture}[scale=1]
	\tikzstyle{important line}=[very thick]

	\begin{scope}
		
		\shade [left color=red, right color=blue, opacity=0.15] (0,-2) rectangle (5.5,1.6);

		\shade [left color=red, right color=blue, opacity=0.15] (1.1,2) rectangle (4.4,2.5);

	\end{scope}
	
	
	

	
	
	\begin{scope}
		
		\shade [left color=red, right color=red, fill opacity=0.15] (-6,-2) rectangle (0,1.6);
		
	\end{scope}

	
	\draw[style=important line,dashed,gruen] (-1,1.6) -- (0,1.6);
	\draw[style=important line,dashed,gruen] (-1,-2) -- (0,-2);
	\draw[style=important line,dashed,gruen] (-1,-2) -- (-1,1.6);
	\draw[style=important line] (0,-2) -- (0,1.6);
	\draw[style=important line] (5.5,-2) -- (5.5,1.6);

	
	
	\draw[decoration={calligraphic brace,amplitude=10pt}, decorate, line width=1.6pt] (0,1.6) node {} -- (5.5,1.6);

	
	

	

	
	
	\node[rotate=0] at (-0.5,1) {\color{gruen} $\boxed{\widehat{\Gamma_{\varepsilon,p}}}$};
	
	\node[rotate=90] at (-0.7,-0.6) {\color{gruen} \large\textbf{perturbed}};
	
	\node[rotate=90] at (-0.3,-0.6) {\color{gruen} \large\textbf{level set}};
	
	\node[rotate=90] at (6,-0.2) {\color{blue} \large\textbf{zero level set}};
	
	
	
	\node[draw, align=left] at (-3.5,0.7) {Flux function based on the\\ optimization problem~\eqref{ARGMIN}};
	
	\node[draw, align=left] at (-3.5,-0.8) {
		Theorem~\ref{TheoremHyp} guarantees \\ hyperbolicity
		in the case\\ \qquad$ 
		\GK \left[\widehat{ \,
			\lVert \u \rVert
			\, }\right]
		\big(\xi(\omega)\big) >0$
	};
	
	
	
	
	\setlength\fboxrule{1.6pt}
	\draw (2.75,2.25) -- (2.75,2.25) node {\boxed{\textbf{random interface}}};
	
	\node[draw, align=left] at (2.75,0) {
		Ill-posed optimization problem\\ due to  non-positive realizations \\
		\hspace*{1cm} $ 
		\GK \left[\widehat{ \,
			\lVert \u \rVert
			\, }\right]
		\big(\xi(\omega)\big)  \in\mathbb{R}$
	};
	

\end{tikzpicture}
	\end{center}
	\caption{Illustration of the theoretical results: The left, red area includes states that satisfy the assumptions of~Theorem~\ref{TheoremHyp} for all basis functions. A perturbed level set  \eqref{randomLS} is green, dashed exemplified. 
		The right, blue domain illustrates the zero-level set. The random interface is a region, where expansions  may result in both positive and vanishing realizations such that assumption \eqref{AI} is violated.}
	\label{FigureAdaptivity}
\end{figure}


\section*{Appendix}
For the sake of completeness we state an example when the presented conservative form loses its hyperbolic character. Furthermore, the numerical discretizations in Section~\ref{SectionFV} are extended to space-varying velocities and two dimensions. 

{
\appendix

\section{Loss of hyperbolicity} \label{SecCounterExample}
To give an example such that the Jacobian~$\JoneDim \big(\widehat{\u},\v\big)$ 
of 
the conservative form yields complex eigenvalues, we consider the states~$
\widehat{\u}=(5,2,-1)^\T
$, $
{\v}=(0,20,2)^\T
$ and use normalized Hermite polynomials. 
More precisely, \cite[example~12.1]{S18} states that  the matrices~\eqref{2ndMoment} read~as
$$
\mathcal{M}_0 = \begin{pmatrix} 1 \\ & 1 \\ & & 1 \end{pmatrix}, 
\quad
\mathcal{M}_1 = \begin{pmatrix}  & 1 \\ 1 & & \sqrt{2} \\ & \sqrt{2}&  \end{pmatrix},
\quad
\mathcal{M}_2 = \begin{pmatrix}  & & 1 \\  & \sqrt{2} \\ 1 & & \sqrt{8} \end{pmatrix}
$$
when these polynomials and the truncation~$K=2$ are used. 
Hence, the stochastic Galerkin matrix and the Galerkin product are given by

$$
\P(\hat{\alpha}) 
=
\begin{pmatrix}
	\hat{\alpha}_0 & \hat{\alpha}_1 & \hat{\alpha}_2 \\
	\hat{\alpha}_1 & \hat{\alpha}_0+\sqrt{2}\hat{\alpha}_2 & \sqrt{2} \hat{\alpha}_1 \\
	\hat{\alpha}_2 & \sqrt{2} \hat{\alpha}_1 & \hat{\alpha}_0+\sqrt{8} \hat{\alpha}_2
\end{pmatrix}
\quad\text{and}\quad
\P(\hat{\alpha}) \hat{\alpha}
=
\begin{pmatrix}
	\lVert \hat{\alpha} \rVert^2 \\
	2\hat{\alpha}_1 (\hat{\alpha}_0 + \sqrt{2} \hat{\alpha}_2) \\
	\sqrt{2}  \hat{\alpha}_1^2 + \sqrt{8} \hat{\alpha}_2^2 + 2 \hat{\alpha}_0 \hat{\alpha}_2
\end{pmatrix}.
$$
Then, the nonlinear system~$\P(\hat{\alpha}) \hat{\alpha}
=
\P(\widehat{\u}) \widehat{\u}
$ 
has the four solutions
$$
\hat{\alpha}^\pm 
\approx 
\pm
\begin{pmatrix}
	\hphantom{-} 5.168558220676993 \\
	\hphantom{-} 1.690363139290745 \\
	-0.654735348671055
\end{pmatrix}
\quad\text{and}\quad
\tilde{\alpha}^\pm
=
\pm
\begin{pmatrix}
	\hphantom{-}5 \\ \hphantom{-}2 \\ -1
\end{pmatrix}.
$$
The matrices~$\P\big( \tilde{\alpha}^\pm \big)$ are indefinite, the matrix~$\P\big( \hat{\alpha}^- \big)$ is negative definite and $\P\big( \hat{\alpha}^+ \big)$ is strictly positive definite. Therefore, the unique solution to the optimization problem~\eqref{ARGMIN} is identified by~$
\hat{\alpha}^+
$ and the Jacobian to the conservative form reads as
$$
\JoneDim \big(\widehat{\u},\v\big)
=
\P\big(\v\big) 
\P\big( \widehat{ \,
	\lVert \u \rVert
	\, } \big)^{-1}
\P\big(\widehat{\u}\big)
\quad\text{for}\quad
\widehat{ \,
	\lVert \u \rVert
	\, }
=
\hat{\alpha}^+.
$$
Its spectrum~$\sigma\left\{
\JoneDim \big(\widehat{\u},\v\big)
\right\}
\approx
\big\{
0.01, \;
30.73 \pm 9.97 i
\big\}
$ is complex. 
In comparison, the eigenvalue estimate to the hyperbolicity preserving scheme is
$$
\vMAX \
\max 
\Big\{ 
\sigma\left\{ 
\JtildeoneDim
\big( \widehat{\u} \big) \right\}
\Big\}
\approx
38.97
\quad\text{with}\quad
\sigma\left\{ 
\JtildeoneDim
\big( \widehat{\u} \big) \right\}
\approx
\big\{
0.93, \; \pm 1
\big\}. 
$$


\section{Two-dimensional numerical discretizations with space-dependent velocity}
We consider a 
two-dimensional  
domain with  equidistant discretization~$
\DxI=x_{i,j+\nicefrac{1}{2},j}- x_{i,j-\nicefrac{1}{2}} 
$,  $\DxII=x_{i+\nicefrac{1}{2},j}- x_{i-\nicefrac{1}{2},j} $ and 
grid cells~$C_{i,j}\subset \mathbb{R}^2 $ 
centered at~$\x_{i,j}$. The cell averages   are denoted by~$\ubar_{i,j}^n$ and~$\wbar_{i,j}^n$. 

\subsubsection*{Conservative form} 
The  discretization~\eqref{NumFluxConserv}~--~\eqref{CFLconservative}, which is introduced in~Section~\ref{SecDiscretizationConservative}, reads in the multidimensional case as 
\begin{align*}
	\frac{
		\ubar_{i,j}^{n+1}
		-
		\ubar_{i,j}^{n}
	}{\Delta t}
	&=
	-
	\frac{
		\NumFlux_1\big(\ubar_{i,j}^{n},\ubar_{i,j+1}^{n},\v( \x_{i,j+\nicefrac{1}{2}} )\big)
		-
		\NumFlux_1\big(\ubar_{i,j-1}^{n},\ubar_{i,j}^{n},\v(\x_{i,j-\nicefrac{1}{2}})\big)
	}{\DxI} \\
	& 
	\hphantom{=}
	-
	\frac{
		\NumFlux_2\big(\ubar_{i,j}^{n},\ubar_{i+1,j}^{n},\v(\x_{i+\nicefrac{1}{2},j})\big)
		-
		\NumFlux_2\big(\ubar_{i-1,j}^{n},\ubar_{i,j}^{n},\v(\x_{i-\nicefrac{1}{2},j})\big)
	}{\DxII}, \\
	\NumFlux_i\big(\ubarL,\ubarR,\v\big) 
	&\coloneqq 
	\frac{1}{2} \Big( \widehat{f_i}\big(\ubarL,\v\big)+\widehat{f_i}\big(\ubarR,\v\big) \Big) 
	- 
	\frac{1}{2}
	\max\limits_{q=\ell,r} \Big\{ 
	\sigma\Big\{ 
	\Ji
	\big( \ubarK,\v\big) \Big\}
	\Big\}
	\big( \ubarR - \ubarL \big).
\end{align*}
Here, the 
CFL-condition  
\begin{equation*}
	\max_{i,j}\bigg\{
	\sigma\left\{ 
	\J \big(\ubar_{i,j}^n,\v\big(\x_{i\pm\nicefrac{1}{2},j\pm\nicefrac{1}{2}})\big)
	\right\}
	\bigg\}
	\frac{\Delta t}{\Delta \x}
	<1
\end{equation*}
is determined by the Jacobian to the conservative form,  which is given in Theorem~\ref{TheoremHyp}, as long as its spectrum is  real.


\subsubsection*{Capacity form} 
The hyperbolicity preserving  discretization, which is introduced  in~Section~\ref{SectionNumericsCapacity}, reads in the two-dimensional case as 
\begin{align*}
	\frac{
		\wbar_{i,j}^{n+1}
		-
		\wbar_{i,j}^{n}
	}{\Delta t}
	&=
	-
	\frac{
		\NumFluxX_1\big(\wbar_{i,j}^{n},\wbar_{i,j+1}^{n},\v( \x_{i,j} )\big)
		-
		\NumFluxX_1\big(\wbar_{i,j-1}^{n},\wbar_{i,j}^{n},\v(\x_{i,j})\big)
	}{\DxI} \\
	& 
	\hphantom{=}
	-
	\frac{
		\NumFluxX_2\big(\wbar_{i,j}^{n},\wbar_{i+1,j}^{n},\v(\x_{i,j})\big)
		-
		\NumFluxX_2\big(\wbar_{i-1,j}^{n},\wbar_{i,j}^{n},\v(\x_{i,j})\big)
	}{\DxII} \\
	&\hphantom{=} -	\big(\partial_{x_1}+\partial_{x_2} \big)\v(\x)\big|_{\x=\x_{i,j}} \;
	\ast
	\widehat{ \
		\lVert 	\wbar_{i,j}^{n} \rVert
		\ }, \\
	\NumFluxX_i\big(\wbarL,\wbarR,\v\big) 
	&\coloneqq 
	\frac{1}{2} \Big( \widetilde{F_i}\big(\wbarL,\v\big)+\widetilde{F_i}\big(\wbarR,\v\big) \Big) 
	- 
	\frac{1}{2}
	\max\limits_{q=\ell,r} \Big\{ 
	\sigma\Big\{ 
	\JC
	\big( \wbarK \big) \Big\}
	\Big\}
	\big( \wbarR - \ubarL \big), \\
	\widetilde{F}_i\big(\wbar,\v \big)
	&\coloneqq
	\Big[
	\diag\{1,1\}\otimes 
	\DP \big(\v(x)\big) 
	\VP(\v)^\T 
	\Big]
	\,
	\widetilde{f_i}\big( \VP(\v) \wbar \big).
\end{align*}

\noindent
The CFL-condition for the capacity form~is based on the real spectrum~of the Jacobian~$\JC (\ubar)
=
\JC
\big( \VP(\v) 	\wbar \big) 
$ in Theorem~\ref{TheoremHyp} and the velocity is evaluated at the cell center, which yields the estimate
$$
\vMAX \
\max\limits_{i,j} \Big\{ 
\sigma\left\{ 
\JC
\big( 	\ubar_{i,j}^{n} \big) \right\}
\Big\}
\frac{\Delta t}{\Delta \x}
<1
\quad\text{with}\quad
\vMAX
\coloneqq
\max\limits_{\stackrel{k=0,\ldots,K,}{i,j}} \Big\{ \Big|
\DP_k \big(\v(\x_{i,j} )\big) \Big|
\Big\}.
$$

}

\bigskip

\section*{Acknowledgments}
The authors thank the Deutsche Forschungsgemeinschaft (DFG,
German Research Foundation) for the financial support through
projects BA4253/11-2 and HE5386/19-2 of the priority program
2183 ``Property-Controlled Forming Processes''. 
Furthermore, this work is supported by the PRIME programme of the German Academic Exchange Service (DAAD).

\bibliographystyle{plain}
\bibliography{BGKbib}

\end{document}